\documentclass[reqno,11pt]{amsart}
\usepackage{amsmath,amssymb,amsfonts,amsthm, amscd,indentfirst}
\usepackage{amsmath,latexsym,amssymb,amsmath,
	amscd,amsthm,amsxtra,mathrsfs}
 
\usepackage[top=3cm, bottom=3.5cm, right=2.5cm, left=2.5cm]{geometry}

\usepackage[hyphens]{url}
\usepackage{hyperref}
\usepackage{bookmark}
\usepackage{amsmath,thmtools,mathtools}
\allowdisplaybreaks 
\mathtoolsset{showonlyrefs=true}
\usepackage[msc-links]{amsrefs} 

\newcounter{mparcnt}

\usepackage{fancyhdr}
\usepackage{esint}
\usepackage{enumerate}
\usepackage{xcolor}

\usepackage{pictexwd,dcpic}
\usepackage{graphicx}
\usepackage{caption}

\usepackage{etoolbox}

\usepackage{graphicx}
\usepackage{caption}
\usepackage{slashed}

\usepackage{epsfig,here}
\usepackage{subfigure,here}

\newtheorem{theorem}{Theorem}[section]
\newtheorem{lemma}[theorem]{Lemma}
\newtheorem{proposition}[theorem]{Proposition}
\newtheorem{definition}[theorem]{Definition}
\newtheorem{remark}[theorem]{Remark}

\newcommand{\abs}[1]{\lvert#1\rvert}

\newcommand{\norm}[1]{\lVert#1\rVert}

\newcommand{\rd}{{\rm d}}
\newcommand{\rdV}{{\rm dV}}

\newcommand{\rVol}{{\rm Vol}}

\newcommand{\rid}{{\rm id}}

\newcommand{\D}{{\slashed{D}}}
\newcommand{\rRe}{{\rm Re}}

\newcommand{\p}{\partial}

\def\<{\langle}
\def\>{\rangle}
\def\S{\mathbb{S}}

\newcommand{\ra}{\rightarrow}

\newcommand{\eq}[1]{\begin{equation}\allowdisplaybreaks\begin{alignedat}{2} #1 \end{alignedat}\end{equation}}

\numberwithin{equation} {section}

\begin{document}

	
\title[A conformal lower bound of weighted Dirac eigenvalues on manifolds with boundary]{
A conformal lower bound of weighted Dirac eigenvalues on manifolds with boundary
}
\date{\today}


\author{Mingwei Zhang}
\address{ Wuhan University, School of Mathematics and Statistics, 430072 Wuhan, China; and 
Albert-Ludwigs-Universit\"at Freiburg, Mathematisches Institut, Ernst-Zermelo-Str. 1, D-79104 Freiburg, Germany}
\email{zhangmwmath@whu.edu.cn}

\begin{abstract}

For the weighted Dirac eigenproblem on a compact spin manifold with the chiral boundary condition
\eq{
    \begin{cases}
        \D\varphi = \lambda f\varphi \quad\hbox{in}\quad M,\\
        \mathbf{B} \varphi = 0 \quad\hbox{on}\quad \partial M,
    \end{cases}
}
we first give a lower bound of the eigenvalue using the relative Yamabe constant
\eq{
 \lambda^2 \geq \frac{n}{4(n-1)} Y(M,\p M,[g]),
}
then prove that equality holds if and only if (up to a conformal transformation) $M$ is a hemisphere and $\varphi$ is a Killing spinor. More generalizations are studied.


\end{abstract}

\subjclass[2000]{53C27, 58C40, 58J32}

\keywords{Dirac operator, conformal invariance, relative Yamabe constant, Killing spinor, boundary condition}
\maketitle

\section{Introduction}

A. Lichnerowicz \cite{Lichnerowicz63} used his formula (see \eqref{pointwise_S-L} below) to establish the fundamental result on closed Riemannian manifolds: 
\eq{
    \lambda_1(\D_g)^2 \geq \frac{1}{4}\inf_{M}R_g\,,
}
where $R_g$ is the scalar curvature and $\lambda_1(\D_g)$ is the smallest positive eigenvalue of the Dirac operator.
Later, T. Friedrich \cite{Friedrich80} improved the lower bound by proving the sharp inequality
\eq{
    \lambda_1(\D_g)^2 \geq \frac{n}{4(n-1)}\inf_{M}R_g,
}
with equality if and only if there exits a non-zero real Killing spinor.
Since then, there have been amount of results which improved the Friedrich inequality in different ways.

Since the Dirac operator is conformally invariant (observed by Hitchin \cite{Hitchin74}, see \eqref{conformal_Dirac}), it is nature to ask if its eigenvalues can be bounded from below by conformal invariants. In \cite{H86}, O. Hijazi proved for closed manifolds with dimension $n\geq3$ that
\eq{
    \lambda_1(\D_g)^2 \geq \frac{n}{4(n-1)}\lambda_1(L_g),
}
where $\lambda_1(L)$ is the smallest positive eigenvalue of the conformal Laplacian operator $L$ given by
\eq{
    L_g u \coloneqq -\frac{4(n-1)}{n-2}\Delta_g u+R_g u.
}
As a consequence, he obtained
\eq{
  \label{Hijazi_ineq}  \lambda_1(\D_g)^2\rVol(M,g)^{\frac{2}{n}} \geq \frac{n}{4(n-1)}Y(M,[g]),
}
where $Y(M,[g])$ is the (classical) Yamabe constant. Later, B\"ar \cite{Bar1992} proved the same inequality for case $n=2$, i.e. for a closed surface $(\Sigma,g)$,
\eq{
    \lambda_1(\D)^2 {\rm Area}(\Sigma,g) \geq 2\pi\chi(\Sigma).
}

Recently, \cite{WZ25_zero_mode} generalized the Hijazi and B\"ar inequality to an inequality for a conformal lower bound of weighted eigenvalues: on a closed manifold $(M^n,g)$ $(n\ge 2$)  with a non-zero function $f$, if there exists a spinor field $\varphi$ such that
\eq{
    \D\varphi = \lambda f\varphi,
}
then
\eq{\label{our_ineq}
    \lambda^2 \|f\|_{L^n(M)}^2\geq \frac{n}{4(n-1)}Y(M,[g]).
}
If $f$ is constant, then we recover the Hijazi and B\"ar inequality.
If $f$ is nowhere vanishing, then one can use a conformal transformation by considering $\tilde g=\abs{f}^2 g$ to reduce to the case that $f$ is constant. In general, this transformation does not work, since $f$ may admit zeros.
The same inequality holds true if we replace the weight $f$ by a general symmetric endomorphism $H$ on the spinor bundle $\Sigma M$, see \cite{WZ25_zero_mode}.
In particular, if the endomorphism $H$ is induced by a one-form $A$, i.e., $H\varphi=iA\cdot \varphi$, then \eqref{our_ineq} was obtained by Frank-Loss \cite{Frank_Loss_2024} and Reu{\ss} \cite{Reuss25} in their study of zero modes. 
Our proof in \cite{WZ25_zero_mode} is motivated by the work of \cite{H86}, \cite{Frank_Loss_2024} and \cite{Jurgen_Julio-Batalla}, especially the work of  Julio-Batalla \cite{Jurgen_Julio-Batalla}, where the author classified equality case in the Hijazi inequality, \eqref{Hijazi_ineq}. Equality in the work of Frank-Loss and Reu{\ss} was classified in \cite{WZ25_zero_mode}.  See also the work in \cite{BGH25}. 

The main objective of the paper is to generalize the results in  \cite{WZ25_zero_mode} to manifolds with boundary.
Let $(M,g)$ be an $n$-dimensional compact Riemannian manifold with boundary $\p M$. The analogy of classical Yamabe constant, the relative Yamabe constant $Y(M,\p M,[g])$, was introduced by J. Escobar \cites{Escobar90, Escobar92}, see Section \ref{sec2.6} for the definition. Based on his result, it is natural to ask if the relative Yamabe constant bounds the boundary Dirac eigenvalues from below. Here we have more than one choices for the boundary condition of a Dirac equation, including the chiral (CHI), the MIT bag, the J-boundary, the Atiyah-Patodi-Singer (APS) and its varieties. For discussions on various boundary conditions, we refer to \cites{Booss-Wof93, HMR02,Ginoux09}. In \cite{Raulot06}, S. Raulot proved for CHI and MIT bag cases that
\eq{\label{Raulot_ineq}
    \lambda_1(\D_g)^2 \geq \frac{n}{4(n-1)}\lambda_1(L_g),
}
and consequently
\eq{ \label{ineq_Y}
    \lambda_1(\D_g)^2\rVol(M,g)^{\frac{2}{n}} \geq \frac{n}{4(n-1)}Y(M,\p M,[g]).
}
He also classified the equality for CHI case, i.e. equality in \eqref{Raulot_ineq} holds if and only if $(M,g)=(\S^n_+,g_{{\rm st}})$.
Equality in \eqref{ineq_Y} is not addressed in \cite{Raulot06}.

In this paper, we classify equality, and
prove a more general result on a conformal lower bound of boundary weighted Dirac eigenvalues. 
On an $n$-dimensional compact Riemannian manifold $(M,g)$ with boundary $\p M$, we consider the following weighted eigenvalue problem with chiral boundary condition:
\eq{\label{intro_0-form_eq}
    \begin{cases}
        \D\varphi = \lambda f\varphi \quad\hbox{in}\quad M,\\
        \mathbf{B} \varphi = 0 \quad\hbox{on}\quad \partial M,
    \end{cases}
}
where $\lambda>0$ is a constant.  
Here the chiral boundary operator $\mathbf{B}$ is one of $\mathbf{B}^\pm$, see Section \ref{sec2.3}.

\begin{theorem}\label{main_thm}
Let $(M^n,g)$ $(n\geq2)$ be a compact spin manifold with boundary $\p M$ and $f$ a non-zero function. If $(M,g)$ admits a non-trivial solution $\varphi$ to \eqref{intro_0-form_eq}, where $\varphi\in L^p$ with $p>\frac{n}{n-1}$, then
\eq{\label{ineq_thm1.1}
 \lambda^2 \|f\|_{L^n (M)}^2 \geq \frac{n}{4(n-1)} Y(M,\p M,[g]).
}
Moreover, the following statements are equivalent:
\begin{enumerate}
    \item Equality in \eqref{ineq_thm1.1} holds true,
    \item  $\varphi$ is a $\pm\frac{1}{2}$-Killing spinor, up to a conformal transformation,
    \item  $(M,g)=(\S^n_+,g_{{\rm st}})$ and $\varphi$ is a $\pm\frac{1}{2}$-Killing spinor, up to a conformal transformation.
\end{enumerate}

\end{theorem}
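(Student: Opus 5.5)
Following Hijazi's conformal argument in the form of \cite{WZ25_zero_mode}, the plan is to bound the relative Yamabe quotient of a test function built from $|\varphi|$ by $\frac{4(n-1)}{n}\lambda^2\|f\|_{L^n}^2$. Take the modified connection $\nabla^f_X\varphi:=\nabla_X\varphi+\frac{\lambda f}{n}X\cdot\varphi$. On the open set $\{\varphi\neq0\}$, use the conformal metric $\tilde g=|\varphi|^{\frac{4}{n-1}}g$; formally this normalizes the spinor, since $\tilde\varphi=|\varphi|^{-\frac{n}{n-1}}\cdot\varphi$ has constant length one after the usual identification. The conformal covariance \eqref{conformal_Dirac} gives $\tilde{\D}\tilde\varphi=\lambda\tilde f\tilde\varphi$ with $\tilde f=f|\varphi|^{-\frac{2}{n-1}}$. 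The chirality operator commutes appropriately with Clifford multiplication by the normal, and the unit normal rescales conformally, so the chiral condition $\mathbf B\varphi=0$ is conformally invariant.

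Next, integrate the Schrödinger--Lichnerowicz formula for the modified connection over $M$ against a cutoff. The boundary term is $\int_{\p M}\langle \D^{\p M}\varphi,\varphi\rangle$ plus a mean curvature term, and for the chiral condition the Dirac-type part vanishes. This leaves
\[
\int_M\Big(|\nabla^f\varphi|^2+\tfrac{R}{4}|\varphi|^2-\tfrac{n-1}{n}\lambda^2f^2|\varphi|^2\Big)+\tfrac{1}{2}\int_{\p M}H|\varphi|^2\le 0 .
\]
Here the term involving $df$ is handled, as in \cite{WZ25_zero_mode}, by pairing it with the modified connection; it is not assumed to vanish. Rewrite this with $u=|\varphi|^{\frac{n-2}{2(n-1)}}$ (for $n\ge3$), in the form of Hijazi's identity. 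The refined Kato inequality $|\nabla^f\varphi|^2\ge\frac{n}{n-1}\big|d|\varphi|\big|^2$ for twistor-type operators, or equivalently the conformal rescaling, produces
\[
\int_M\Big(\tfrac{4(n-1)}{n-2}|du|^2+Ru^2\Big)+2\int_{\p M}Hu^2\le\tfrac{4(n-1)}{n}\lambda^2\int_M f^2u^2|\varphi|^{\frac{2}{n-1}} .
\]
Hölder's inequality then gives $\int f^2u^{2}|\varphi|^{\frac{2}{n-1}}\le\|f\|_{L^n}^2\|u\|_{L^{2n/(n-2)}}^2$, because $u^{\frac{2n}{n-2}}=|\varphi|^{\frac{n}{n-1}}$ matches exactly. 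Dividing by $\|u\|^2_{L^{2n/(n-2)}}$ and using the definition of $Y(M,\p M,[g])$ yields \eqref{ineq_thm1.1}. For $n=2$ I would argue by Bär's approach, using the Gauss--Bonnet formula with geodesic curvature on the boundary, or by a limiting argument.

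The main obstacle is regularity and the zero set of $\varphi$. The test function $u$ must lie in $H^1$ up to the boundary, and the cutoff near $\{\varphi=0\}$, including near $\p M$, must produce vanishing error terms. This is where the hypothesis $\varphi\in L^p$ with $p>\frac n{n-1}$ enters. First I would bootstrap, using elliptic boundary regularity for the chiral condition, to show that $\varphi$ is smooth up to the boundary. Then I would regularize with $u_\varepsilon=(|\varphi|^2+\varepsilon^2)^{\frac{n-2}{4(n-1)}}$ and let $\varepsilon\to0$, as in \cite{WZ25_zero_mode}, checking that the boundary terms pass to the limit.

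For the equality case, equality forces equality in Hölder, so $|f|^n$ is proportional to $|\varphi|^{\frac{n}{n-1}}$. It also forces $u$ to be a relative Yamabe minimizer, so $u>0$ and hence $\varphi$ has no zeros. It further forces $\nabla^f\varphi\equiv0$, with $f$ nonvanishing. Thus in $\tilde g$ the weight is a constant $\pm$, and $\tilde\varphi$ is a $\mp\frac12$-Killing spinor after normalizing $\lambda\tilde f=\pm\frac n2$; this gives (1)$\Rightarrow$(2). For (2)$\Rightarrow$(3), a Killing spinor forces $\tilde g$ to be Einstein with $R=n(n-1)$. The chiral condition gives that $|\tilde\varphi|^2$ has vanishing normal derivative, and that the boundary is minimal and in fact totally geodesic. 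I would then invoke Raulot's equality characterization \cite{Raulot06}, or Escobar's Obata-type theorem, to conclude $(M,\tilde g)=(\S^n_+,g_{\rm st})$. Finally, (3)$\Rightarrow$(1) is a direct computation: on the hemisphere $Y(\S^n_+,\p\S^n_+)=n(n-1)\rVol(\S^n_+)^{2/n}$, and Killing spinors satisfying the chiral condition exist and give equality.
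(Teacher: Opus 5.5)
Your route differs from the paper's: you take the conformal factor from the spinor itself, $\tilde g=\abs{\varphi}^{\frac4{n-1}}g$. The paper instead uses the positive weighted first eigenfunction $u_0$ of $(L_g,B_g)$, which gives $R_{\tilde g}=\gamma_1([g,f])f_{\tilde g}^2$ and $H_{\tilde g}=0$, so \eqref{integral_S-L} closes at once. Your idea can work, but its central step is wrong as written.

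\textbf{The test function has the wrong exponent.} Writing $\tilde g=u^{\frac4{n-2}}g$ forces $u=\abs{\varphi}^{\frac{n-2}{n-1}}$, not $\abs{\varphi}^{\frac{n-2}{2(n-1)}}$, and the unit spinor is $\psi=\abs{\varphi}^{-1}\tilde\varphi$. With your $u$, both your displayed energy inequality and your H\"older estimate compare quantities of different homogeneity in $\varphi$ ($\frac{n-2}{n-1}$ versus $\frac{n}{n-1}$). Since the equation is linear, rescaling $\varphi$ by a constant shows neither can hold.

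\textbf{The integrated identity in $g$ cannot be rewritten this way.} No power of $\abs\varphi$ can be substituted into the \emph{already integrated} identity. There the refined Kato inequality $\abs{P\varphi}^2\ge\frac{n}{n-1}\abs{d\abs\varphi}^2$ only controls $\int\frac{4n}{n-1}\abs{d\abs\varphi}^2+R\abs\varphi^2$, and $\frac{4n}{n-1}<\frac{4(n-1)}{n-2}$.

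\textbf{The correct step.} Apply \eqref{integral_S-L} in $\tilde g$ to $\psi$, which satisfies $\D_{\tilde g}\psi=\lambda\tilde f\psi$ with $\tilde f=f\abs\varphi^{-\frac2{n-1}}$ and the chiral condition. Then $\int_M R_{\tilde g}\rdV_{\tilde g}+2(n-1)\int_{\p M}H_{\tilde g}\rd s_{\tilde g}$ is exactly the numerator of \eqref{def_relative_Yamabe_constant} at $u$, and $\int\tilde f^2\rdV_{\tilde g}=\int f^2u^2\rdV_g$. Hence the numerator is at most $\frac{4(n-1)}{n}\lambda^2\int f^2u^2\le\frac{4(n-1)}{n}\lambda^2\norm{f}_{L^n}^2\norm{u}_{L^{2n/(n-2)}}^2$.

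Pointwise, this is $\frac12\Delta\abs\varphi^2=\abs{P\varphi}^2+(\frac R4-\frac{n-1}{n}\lambda^2f^2)\abs\varphi^2$ combined with Kato. The exponent $\frac{n-2}{n-1}$ is precisely the one that cancels the gradient term, and this pointwise form is what your $\varepsilon$-regularization near zeros needs. At the boundary the required input is the pointwise identity $\p_\nu\abs\varphi^2=(n-1)H\abs\varphi^2$, which follows from \eqref{boundary_Dirac}, $\rRe\<\nu\cdot\varphi,\varphi\>=0$ and \eqref{vanishing_boundary_Dirac}. It says $\tilde g$ has minimal boundary, and it is what makes the boundary errors vanish as $\varepsilon\to0$. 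Bootstrapping $\varphi$ to smoothness also presumes $f$ smooth.

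\textbf{Equality case.} Once corrected, your equality argument is sound and partly more direct than the paper's. Equality makes $u$ a nonnegative relative Yamabe minimizer, hence smooth and positive, so $\varphi$ has no zeros. H\"older equality gives $\abs f\propto\abs\varphi^{\frac2{n-1}}$, i.e.\ $\abs{\tilde f}$ is constant. Since $\lambda\tilde f=\rRe\<\D_{\tilde g}\psi,\psi\>$ is continuous, $\tilde f$ is constant, and $P\psi=0$ gives the Killing spinor immediately. This bypasses the paper's proof that $\abs{\varphi_{\tilde g}}$ is constant.

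For (2)$\Rightarrow$(3), Raulot's equality characterization does apply: the Killing spinor is a chiral eigenspinor for $\mp\frac n2$, the boundary is minimal, and $\lambda_1(L)=n(n-1)$. Escobar's Obata-type uniqueness theorem does not identify $\S^n_+$, because Einstein with totally geodesic boundary is not enough (e.g.\ $\S^2_+\times\S^2$ with equal radii). The paper instead shows $\<\varphi,G\varphi\>$ is a nonzero Dirichlet eigenfunction with eigenvalue $n$ and applies Reilly's theorem.

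\textbf{Comparison.} The paper's conformal factor is positive from the outset, so the nodal set of $\varphi$ never enters, and it yields the intermediate bound $\lambda^2\ge\frac{n}{4(n-1)}\gamma_1([g,f])$. Your route avoids the auxiliary weighted eigenvalue problem, but must handle the zeros of $\varphi$ (including on $\p M$) and needs more regularity.
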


If $f$ is constant, \eqref{ineq_thm1.1} was proved by  Raulot in \cite{Raulot06}.
The same inequality holds true for two generalized cases: the weight $f$ replaced by a general symmetric endomorphism $H$ on the spinor bundle $\Sigma M$, and the CHI boundary condition replaced by the MIT bag or J-boundary condition, see Section \ref{sec_generalization}.

The Dirac operator and spinor fields play an important role in both geometry and analysis. Since \cite{Hitchin74}, the study of conformal invariants related to the Dirac operator has attracted more and more interests of mathematicians. Here we list some of the related works. For spinorial conformal invariants, see \cites{A03, Raulot09, WZ25_zero_mode, Jurgen_Julio-Batalla}; for the related Dirac equations, see \cites{Borrelli21, Trebuchon25, Booss-Wof93, Qiu-Wu25, Ding-Li18}; for the related eigenvalue estimates, see \cites{Ji-Lin21, WZ25_zero_mode, ABLO21, Raulot06, Reuss25, Frank_Loss_2024}; for other related works on the Dirac operator, see \cites{CJLW06, WZ25, JWZ07, FL1}.

\ 

\noindent{\it Organization of the rest of the paper:} In Section 2 we review briefly spin geometry, the Dirac operator, the relative Yamabe constant, and the boundary Dirac equation together with its conformal invariance. In Section 3, we introduce a family of conformal invariants, and relate it to the relative Yamabe constant. In Section 4, we complete the proof of Theorem \ref{main_thm}. In Section 5, we discuss two kinds of generalizations, one of the weight $f$ and the other of the boundary operator $\mathbf{B}$. In Section 6, we given an application to the energy gap of a ground state solution. The proof for the regularity is given in Appendix A for the completeness.

\section{Preliminaries}

\subsection{Spin manifolds and the Dirac operator}

In this subsection we recall some basics about spin manifolds and the Dirac operator. For details, we refer to \cites{Lawson, Baum_Book_90, Friedrich_Book, Ginoux09}.

Let $(M,g)$ be a Riemannian manifold of dimension $n\geq 2$. Assume $M$ is orientable, then it admits an ${\rm SO}(n)$-principle bundle $P_{{\rm SO}(n)}M$. Each fibre is an oriented orthonormal basis. If $P_{{\rm SO}(n)}M$ can be lifted up to $P_{{\rm Spin}(n)}M$, then we say $M$ is a spin manifold. The Lie group ${\rm Spin}(n)$ is the universal cover of ${\rm SO}(n)$, which is 2-fold. We call the quotient map $\sigma: P_{{\rm Spin}(n)}M \ra P_{{\rm SO}(n)}M$ a spin structure. In this paper, each manifold is supposed to be spin. It is well known that $M$ is spin if and only if the second Stiefel-Whitney class of $M$ vanishes. In particular, $\S^n$ is a spin manifold.

Let $\Sigma M$ be the associated complex vector bundle of $P_{{\rm Spin}(n)}M$, then $\Sigma M$ has complex rank $2^{[\frac n2]}$. The Riemannian metric $g$ on $M$ endows a canonical Hermitian metric $\<\cdot,\cdot\>$ and the corresponding spin connection $\nabla$ on $\Sigma M$.

A \textit{spinor field} is defined to be a section of $\Sigma M$. Tangent vector fields act on spinor fields by $\gamma: TM \ra {\rm End}_{\mathbb{C}}(\Sigma M)$, denoted by $X\cdot\psi\coloneqq \gamma(X)(\psi)$, which is called the Clifford multiplication. It satisfies $X\cdot Y\cdot \psi + Y\cdot X\cdot \psi = -2 g(X,Y)\psi$, and is anti-symmetric w.r.t. $\<\cdot,\cdot\>$.

Let $\{e_{j}\}_{j=1}^{n}$ be an orthonormal frame of $M$. The Dirac operator $\D:\Gamma(\Sigma M) \ra \Gamma(\Sigma M)$ is defined by
\eq{
    \D\psi \coloneqq \sum_{j=1}^{n} e_{j}\cdot \nabla_{e_{j}}\psi, \quad\forall \,\psi\in\Gamma(\Sigma M).
}
It is well known that $\D$ is a first-order elliptic operator, which plays the role as the square root of Laplacian-Beltrami via the Schr\"odinger-Lichnerowicz formula 
\eq{\label{pointwise_S-L}
    \D^2 = -\Delta + \frac{R}{4},
}
where $R$ is the scalar curvature. A spinor field $\psi$ is called a \textit{Killing spinors}, if
\eq{
    \nabla_{X}\psi = b X\cdot \psi, \quad \forall \,X\in\Gamma(TM),
}
where the constant $b\in\mathbb{C}$ is called the Killing number.
In particular, $\psi$ is called a real Killing spinor if $b\in\mathbb{R}$.
Existence of a non-trivial Killing spinor is a highly demanding requirement of the manifold.
The classification of complete simply connected Riemannian spin manifolds that carry a non-zero space of $b$-Killing spinor was given by M. Wang \cite{McWang1989} for $b=0$, C. B\"ar \cite{Bar1993} for $b\in\mathbb{R}\backslash\{0\}$ and H. Baum, etc. \cite{Baum_Book_90} for $b\in i\mathbb{R}\backslash\{0\}$.
In this paper, all Killing spinors are supposed to be real, therefore without loss of generality we may assume $b=\pm \frac 12 $.

\subsection{The boundary Dirac operator}

Let $(M,g)$ be an $n$-dimensional compact Riemannian manifold with non-empty boundary $\p M$ which is an oriented hypersurface of $M$ with the induced orientation. We denote by $\nu$ the unit inner normal vector field on $\p M$. It is well known that $\nu$ induces a spin structure on $\p M$ by restriction, denoted by $\Sigma\p M$.
Note that $\Sigma\p M$ is slightly different from the intrinsic spinor bundle $\Sigma_{{\rm int}}\p M$ on $\p M$. In fact, according to the chirality of spin representation, $\Sigma\p M$ coincides with $\Sigma_{{\rm int}}\p M$ if $n$ is odd, and with $\Sigma_{{\rm int}}\p M\oplus \Sigma_{{\rm int}}\p M$ if $n$ is even.
For the details we refer to \cites{Bar_Note, Ginoux09, Trautman95, Raulot06}.

The associated Clifford multiplication on $\p M$ is
\eq{
    X\cdot^{\p M}\psi \coloneqq X\cdot\nu\cdot\psi, \quad\forall X\in\Gamma(T\p M).
}
We denote by $\nabla,\nabla^{\p M}$ the Levi-Civita connection on $M,\p M$, respectively.
Recall the Gau\ss{} equation
\eq{
    \nabla_X Y = \nabla^{\p M}_X Y + g(A(X), Y)\nu, \quad\forall X,Y\in\Gamma(T\p M),
}
where $A(X)\coloneqq -\nabla_X\nu$ is the shape operator of $\p M$. 
The associated spin connection satisfies the spinorial Gau\ss{} equation on $\p M$
\eq{
    \nabla_X\psi = \nabla^{\p M}_X\psi + \frac{1}{2} A(X)\cdot^{\p M} \psi = \nabla^{\p M}_X\psi + \frac{1}{2} A(X)\cdot \nu \cdot \psi, \quad\forall X\in\Gamma(T\p M).
}
Let $\{e_1,\cdots,e_{n-1},e_n=\nu\}$ be an orthonormal frame of $TM$.
The associated Dirac operator on $\p M$, the boundary Dirac operator, is defined by
\eq{\label{boundary_Dirac}
    \D^{\p M}\psi \coloneqq \sum_{j=1}^{n-1} e_j\cdot^{\p M}\nabla^{\p M}_{e_j}\psi = -\nu\cdot\sum_{j=1}^{n-1}e_j\cdot\nabla_{e_j}\psi + \frac{n-1}{2}H\psi = -\nu\cdot\D\psi - \nabla_{\nu}\psi + \frac{n-1}{2}H\psi,
}
where $H\coloneqq\frac{1}{n-1}{\rm tr}(A)$ is the mean curvature of $\p M$. 
Note that $\D^{\p M}$ is slightly different from the intrinsic Dirac operator $\D^{\p M}_{{\rm int}}$ on $\p M$. In fact, according to the chirality of spin representation, $\D^{\p M}$ coincides with $\D^{\p M}_{{\rm int}}$ if $n$ is odd, and with $\D^{\p M}_{{\rm int}}\oplus -\D^{\p M}_{{\rm int}}$ if $n$ is even. 
A direct consequence is
\eq{\label{anticommute_D_nu}
    \D^{\p M}(\nu\cdot\psi) = - \nu\cdot \D^{\p M}\psi.
}
Therefore, ${\rm Spec}(\D^{\p M})$ is symmetric w.r.t. $0$.

\subsection{The chiral boundary operator}\label{sec2.3}

On an $n$-dimensional compact Riemannian manifold $(M,g)$ with boundary $\p M$, the chiral boundary operator $\mathbf{B}^\pm:\Gamma(\Sigma\p M,g)\to\Gamma(\Sigma\p M,g)$ is defined by
\eq{
    \mathbf{B}^\pm\varphi \coloneqq \frac{1}{2}(\rid \pm \nu\cdot G)\varphi,
}
acting on the restriction of the spinor bundle to the boundary, where $G:\Gamma(\Sigma M,g)\to \Gamma(\Sigma M,g)$ is an endomorphism which satisfies
\eq{\label{def_G}
    G^2 = \rid, \quad G^* = G, \quad \nabla G = 0, \quad X\cdot G\varphi = - G(X\cdot\varphi), \quad\forall X\in\Gamma(TM).
}
A direct consequence is
\eq{\label{commute_D_G}
    G\D^{\partial M} = \D^{\partial M}G.
}

We remark that if $n$ is even, then
\eq{
    G=\begin{cases}
        \omega\cdot \quad\hbox{if}\ n\equiv0\!\!\mod 4\\
        i\omega\cdot \quad\hbox{if}\ n\equiv2\!\!\mod 4
    \end{cases}
}
is a canonical choice of $G$, where $\omega\,\cdot:\Gamma(\Sigma M,g)\to \Gamma(\Sigma M,g)$ is the Clifford multiplication given by volume element, i.e. $\omega\,\cdot \coloneqq e_1\cdot\cdots\cdot e_n\cdot$.

\subsection{The weighted eigenvalue problem}

On an $n$-dimensional compact Riemannian manifold $(M,g)$ with boundary $\p M$, we consider the following weighted eigenvalue problem with chiral boundary condition:
\eq{\label{0-form_eq}
    \begin{cases}
        \D\varphi = \lambda f\varphi \quad\hbox{in}\quad M,\\
        \mathbf{B} \varphi = 0 \quad\hbox{on}\quad \partial M,
    \end{cases}
}
where $\lambda>0$ and $\norm{f}_n=1$. Here the chiral boundary operator $\mathbf{B}$ is one of $\mathbf{B}^\pm$.

We recall important conformal properties of the Dirac operator.
Let $\tilde{g}=h^{\frac{4}{n-2}}g$. The isometry $(TM,g) \to (TM, \tilde{g})$ with $X\mapsto \tilde{X}=h^{-\frac{2}{n-2}}X$ induces a canonical isometry $F: (\Sigma M,g) \to (\Sigma M, \tilde{g})$ with $\varphi \mapsto \tilde{\varphi}$. It preserves the Hermitian metric on spinor bundle, and the induced spin representation $\tilde{\cdot}$ satisfies $\widetilde{X\cdot\varphi} = \tilde{X}\,\tilde{\cdot}\,\tilde{\varphi}$. The induced Dirac operator $\D_{\tilde{g}}$ satisfies the well-known property (see for instance \cite{Ginoux09}*{Proposition 1.3.10})
\eq{\label{conformal_Dirac}
     \D_{\tilde{g}}(h^{-\frac{n-1}{n-2}}\tilde{\varphi}) = h^{-\frac{n+1}{n-2}}\widetilde{\D_g\varphi},
}
and similarly for the boundary Dirac operator (see for instance \cite{HMZ02})
\eq{\label{conformal_boundary_Dirac}
    \D^{\p M}_{\tilde{g}}(h^{-1}\tilde{\varphi}) = h^{-\frac{n}{n-2}}\widetilde{\D^{\p M}_g\varphi}.
}
Moreover, the chiral boundary operator is conformally invariant in the sense that (see also \cite{Raulot09})
\eq{
    \tilde{G} \coloneqq F \circ G \circ F^{-1} : \Gamma(\Sigma M,\tilde{g})\to \Gamma(\Sigma M,\tilde{g})
}
is a chiral boundary operator w.r.t. $\tilde{g}$.

\begin{definition}\label{Def2}
The conformal class of $(g, f)$ is defined by
\eq{
[g,f] \coloneqq \{(\tilde g, f_{\tilde g}) \,:\, \tilde g=h^{\frac{4}{n-2}}g,\ f_{\tilde g}=h^{-\frac{2}{n-2}}f \ \hbox{for some positive function}\ h\}.
}
\end{definition}

\begin{lemma}[Conformal invariance]\label{conformal_invariance}
Equation \eqref{0-form_eq} is conformally invariant in the following sense: Let $(\varphi,f)$ be a solution to \eqref{0-form_eq} w.r.t. metric $g$, and $\tilde g =h^{\frac 4{n-2}} g$.
Define 
\eq{\label{conformal_f}
     \varphi_{\tilde{g}} \coloneqq h^{-\frac{n-1}{n-2}}\tilde{\varphi}, \qquad f_{\tilde{g}} \coloneqq h^{-\frac{2}{n-2}}f.
}
Then $(\varphi_{\tilde g},f_{\tilde g})$ is a solution to \eqref{0-form_eq} w.r.t. metric $\tilde{g}$ with the same $\lambda$.
\end{lemma}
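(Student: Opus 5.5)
The proof splits into an interior part and a boundary part. For the interior equation, we apply the conformal covariance formula \eqref{conformal_Dirac} directly to $\varphi$. This gives
\eq{
\D_{\tilde g}\varphi_{\tilde g} = \D_{\tilde g}\big(h^{-\frac{n-1}{n-2}}\tilde\varphi\big) = h^{-\frac{n+1}{n-2}}\widetilde{\D_g\varphi} = h^{-\frac{n+1}{n-2}}\lambda f\tilde\varphi .
}
The prefactor splits as $h^{-\frac{n+1}{n-2}} = h^{-\frac{2}{n-2}}\cdot h^{-\frac{n-1}{n-2}}$. Hence the right-hand side equals $\lambda\, (h^{-\frac{2}{n-2}}f)\,(h^{-\frac{n-1}{n-2}}\tilde\varphi) = \lambda f_{\tilde g}\varphi_{\tilde g}$. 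We use here that $f$ is a scalar function, so it commutes with $F$: $\widetilde{f\varphi}=f\tilde\varphi$, since $F$ is fibrewise linear.

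For the boundary condition, we use the conformally transformed quantities $\tilde\nu$ and $\tilde G = F\circ G\circ F^{-1}$. The unit inner normal for $\tilde g$ is $\tilde\nu = h^{-\frac{2}{n-2}}\nu$, which is exactly the image of $\nu$ under the isometry $X\mapsto\tilde X$. Since $F$ intertwines Clifford multiplication, we have $\tilde\nu\,\tilde\cdot\,\tilde G\tilde\psi = \widetilde{\nu\cdot G\psi}$ for every spinor $\psi$. It follows that $\tilde{\mathbf B}^\pm(\tilde\psi) = \widetilde{\mathbf B^\pm\psi}$.

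The operator $\tilde{\mathbf B}^\pm$ is also linear over functions. So, by $\mathbf B\varphi=0$,
\eq{
\tilde{\mathbf B}^\pm \varphi_{\tilde g} = h^{-\frac{n-1}{n-2}}\,\widetilde{\mathbf B^\pm\varphi} = 0
}
on $\p M$. By the conformal invariance statement recalled just before Definition \ref{Def2}, $\tilde G$ satisfies \eqref{def_G} for $\tilde g$, so $\tilde{\mathbf B}^\pm$ is a legitimate chiral boundary operator for $\tilde g$.

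The only point needing care is that $\tilde G$ really satisfies \eqref{def_G}. The algebraic properties $\tilde G^2=\rid$, $\tilde G^*=\tilde G$ and anticommutation with $\tilde X\,\tilde\cdot$ are immediate, because $F$ is an isometry intertwining Clifford multiplication. The parallelism $\tilde\nabla\tilde G=0$ needs more work. The conformal change of the spin connection is
\eq{
\tilde\nabla_X\tilde\psi = \widetilde{\nabla_X\psi} - \tfrac12\,\widetilde{X\cdot\nabla u\cdot\psi} - \tfrac12 X(u)\tilde\psi, \qquad u = \tfrac{2}{n-2}\log h .
}
It adds only terms that are Clifford products of two vectors, or scalars. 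These commute with $G$, since $G$ anticommutes with each single vector. Hence $\tilde\nabla\tilde G=0$; this is presumably what is cited from \cite{Raulot09}.

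The constant $\lambda$ is unchanged. The normalization $\|f\|_n=1$ is also preserved, because
\eq{
|f_{\tilde g}|^n\,dV_{\tilde g} = h^{-\frac{2n}{n-2}}|f|^n\, h^{\frac{2n}{n-2}}\,dV_g = |f|^n\,dV_g .
}
This also shows that $\|f\|_{L^n}$ is a conformal invariant, as needed in \eqref{ineq_thm1.1}.
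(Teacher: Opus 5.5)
Your proof is correct and matches the paper's argument: you apply \eqref{conformal_Dirac} with the splitting $h^{-\frac{n+1}{n-2}}=h^{-\frac{2}{n-2}}h^{-\frac{n-1}{n-2}}$, check that $\int|f_{\tilde g}|^n\,\rd V_{\tilde g}=\int|f|^n\,\rd V_g$, and push $\mathbf B^\pm$ through the isometry $F$ to get $\tilde{\mathbf B}^\pm\varphi_{\tilde g}=h^{-\frac{n-1}{n-2}}\widetilde{\mathbf B^\pm\varphi}=0$. The only addition is your verification that $\tilde\nabla\tilde G=0$: the correction terms in the conformally changed spin connection are scalars or Clifford products of two vectors, and these commute with $G$, whereas the paper simply cites this fact from Raulot.
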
 
     
\begin{proof}
In fact, \eqref{conformal_Dirac} and \eqref{0-form_eq} imply
\eq{
    \D_{\tilde{g}}\varphi_{\tilde{g}} = h^{-\frac{n+1}{n-2}}\widetilde{\D_g\varphi} = \lambda h^{-\frac{n+1}{n-2}}f\tilde{\varphi} = \lambda h^{-\frac{n+1}{n-2}}\cdot h^{\frac{2}{n-2}}f_{\tilde{g}}\cdot h^{\frac{n-1}{n-2}}\varphi_{\tilde{g}} = \lambda f_{\tilde{g}}\varphi_{\tilde{g}},
}
where
\eq{
    \int \abs{f_{\tilde{g}}}^n  \rd V_{\tilde{g}} = \int \abs{h^{-\frac{2}{n-2}} f}^n \cdot h^{\frac{2n}{n-2}}\, \rd V_g = \int \abs{f}^n \rd V_g = 1.
}
Moreover, since $\mathbf{B}\varphi=0$ on $\p M$, we have
\eq{
    \tilde{\mathbf{B}}\varphi_{\tilde{g}} = \frac{1}{2}(\rid \pm \tilde{\nu}\,\tilde{\cdot}\, \tilde{G})\varphi_{\tilde{g}} = h^{-\frac{n-1}{n-2}} \frac{1}{2} ( \tilde{\varphi} \pm \widetilde{ \nu\cdot G\varphi } ) = h^{-\frac{n-1}{n-2}} \widetilde{\mathbf{B}\varphi} = 0 \quad\hbox{on}\ \p M.
}

\end{proof}
    
Therefore, in order to estimate $\lambda$, one can choice a suitable conformal metric.

\subsection{The integral Schr\"odinger-Lichnerowicz formula}

On an $n$-dimensional compact Riemannian manifold $(M,g)$ with boundary $\p M$, the Dirac operator $\D$ is in general not $L^2$-self-adjoint. In fact, it is easy to check for any spinor fields $\psi$ and $\varphi$ that
\eq{\label{general_not_self_adjoint}
    \int_M \<\D\psi,\varphi\> - \int_M \<\psi,\D\varphi\> = -\int_{\p M}\<\nu\cdot\psi,\varphi\>.
}
We recall the well known identity
\eq{\label{well_known_identity}
    \abs{\nabla\varphi}^2 = \frac{1}{n}\abs{\D\varphi}^2 + \abs{P\varphi}^2,
}
where $P$ is the twistor operator (also called the Penrose operator) defined by $P_X\varphi \coloneqq \nabla_X\varphi + \frac{1}{n}X\cdot\varphi$.
Using \eqref{pointwise_S-L}, \eqref{boundary_Dirac}, \eqref{general_not_self_adjoint} and \eqref{well_known_identity} we obtain the integral Schr\"odinger-Lichnerowicz formula
\eq{\label{integral_S-L}
    \int_{\partial M} \Big( \<\D^{\partial M}\varphi,\varphi\> - \frac{n-1}{2}H\abs{\varphi}^2 \Big) = \int_M \Big( \frac{R}{4}\abs{\varphi}^2 - \frac{n-1}{n}\abs{\D\varphi}^2 \Big) + \int_M \abs{P\varphi}^2.
}

In particular, let us assume the chiral boundary condition. In this case, $\D$ is $L^2$-self-adjoint. Namely, if
\eq{
    \mathbf{B}\psi = \mathbf{B}\varphi = 0 \quad\hbox{on}\ \p M,
}
then for $\mathbf{B}=\mathbf{B}^\pm$ we have
\eq{
    G\psi = \pm\nu\cdot\psi, \quad G\varphi = \pm\nu\cdot\varphi \quad\hbox{on}\ \p M,
}
hence
\eq{\label{vanishing_boundary_nu_cdot}
    \<\psi,\nu\cdot\varphi\> 
    = \pm\<\psi,G\varphi\>
    = \pm\<G\psi,\varphi\>
    = \<\nu\cdot\psi,\varphi\>
    = -\<\psi,\nu\cdot\varphi\> \quad\hbox{on}\ \p M.
}
Therefore $\<\psi,\nu\cdot\varphi\> = 0$ on $\p M$, and
\eq{
    \int_M\<\D\psi,\varphi\> = \int_M\<\psi,\D\varphi\>.
}
Moreover, if $\mathbf{B}\varphi = 0$ on $\p M$, then we have
\eq{
    \<\D^{\p M}\varphi,\varphi\> = \<\nu\cdot G\D^{\p M}\varphi, \nu\cdot G\varphi\> = -\<\D^{\p M}(\nu\cdot G\varphi),\nu\cdot G\varphi\> = -\<\D^{\p M}\varphi,\varphi\>,
}
where we have used \eqref{anticommute_D_nu} and \eqref{commute_D_G}. Hence
\eq{\label{vanishing_boundary_Dirac}
    \<\D^{\p M}\varphi,\varphi\> = 0 \quad\hbox{on}\ \p M.
}

We remark that since the chiral boundary condition is local elliptic and makes $\D$ $L^2$-self-adjoint, the following eigenproblem
\eq{\label{chiral_eigenspinor}
    \begin{cases}
        \D\varphi = \lambda \varphi \quad\hbox{in}\quad M,\\
        \mathbf{B} \varphi = 0 \quad\hbox{on}\quad \partial M,
    \end{cases}
}
has a real and discrete spectrum, see also \cite{HMR02}. One can also see from the estimate given in \cite{CJSZ2018} that $0\not\in{\rm Spec}$. The lower bound of $\lambda$ given by the relative Yamabe constant was proved in \cite{Raulot06}, which is a direct consequence of Theorem \ref{main_thm}.

\subsection{The relative Yamabe constant}\label{sec2.6}

The classical Yamabe constant is defined on a closed manifold by 
\eq{\label{classical_Yamabe_constant}
    Y(M,[g]) \coloneqq \inf _{0\not\equiv u\in W^{1,2} } \frac {\int_M uL_g u \,\rdV_g}{(\int _M u^{\frac {2n}{n-2}}\rdV_g)^{\frac{n-2}{n}}},
}
where $L_g$ is the conformal Laplacian given by
\eq{
    L_g u \coloneqq -\frac{4(n-1)}{n-2}\Delta_g u+R_g u.
}
A conformal metric $\tilde g= h^{\frac {4}{n-2}}g$ has scalar curvature
\eq{\label{scalar_curvature}
    R_{\tilde g}=h^{-\frac {n+2}{n-2}} L_g h  = h^{-\frac {n+2}{n-2}} \left\{-\frac{4(n-1)}{n-2}\Delta_g h+R_g h\right\}.
}
For a compact manifold with boundary, the analogy is defined by (see \cite{Escobar92})
\eq{\label{def_relative_Yamabe_constant}
    Y(M,\p M,[g]) \coloneqq \inf _{0\not\equiv u\in W^{1,2} } \frac {\int_M \big( \frac{4(n-1)}{n-2}\abs{\nabla u}_g^2 + R_g u^2 \big) \,\rdV_g + 2(n-1)\int_{\p M}Hu^2 \rd s_g }{(\int _M u^{\frac {2n}{n-2}}\rdV_g)^{\frac{n-2}{n}}}.
}
A conformal metric $\tilde g= h^{\frac {4}{n-2}}g$ has mean curvature of $\p M$
\eq{\label{mean_curvature}
    H_{\tilde g} = h^{-\frac{n}{n-2}} B_g h  = h^{-\frac{n}{n-2}} \left\{\frac{2}{n-2}\frac{\p h}{\p \nu}+H_g h\right\},
}
where $B_g$ is the conformal boundary operator given by
\eq{
    B_g u \coloneqq \frac{2}{n-2}\frac{\p u}{\p \nu}+H_g u.
}
The operators $L$ and $B$ transform nicely under the conformal change:
\eq{\label{conformal_L_and_B}
    L_{\tilde{g}} = h^{-\frac{n+2}{n-2}}L_g(hu), \qquad B_{\tilde{g}} = h^{-\frac{n}{n-2}}B_g(hu).
}

The classical Yamabe problem has a satisfying resolution, due to Yamabe, Trudinger, Aubin and Schoen, that the infimum in \eqref{classical_Yamabe_constant} is achieved by a conformal metric $\tilde{g}$, which is called a Yamabe minimizer. For an introduction of the Yamabe problem, we refer to \cite{Lee_Parker}. If $(M,g)$ is Einstein, then Obata \cite{Obata71} implies that any other conformal metric $\tilde{g}$ with constant scalar curvature is also Einstein. If in addition $(M,g)$ is not conformally equivalent to $(\S^n,g_{{\rm st}})$, then $\tilde g=cg$ for some constant $c>0$, hence any conformal metric of constant scalar curvature is a Yamabe minimizer.

For the relative Yamabe problem, Escobar \cite{Escobar92} proved that the counterpart of the Yamabe minimizer has constant scalar curvature and zero mean curvature on boundary. The counterpart of Obata's rigidity was proved by \cite{Reilly77} for Dirichlet case and by \cite{Escobar90} for the Neumann case.
We note that that the Yamabe constant of the sphere $\S^n$ is
\eq{
    Y(\S^n) = \frac{4(n-1)}{n-2} S_n = n(n-1)\omega_n^{\frac{2}{n}},
}
and the relative Yamabe constant of the hemisphere $\S^n_+$ is
\eq{\label{relative_Yamabe_constant_hemisphere}
    Y(\S^n_+,\p\S^n_+) = n(n-1)\Big(\frac{\omega_n}{2}\Big)^{\frac{2}{n}}.
}


\section{A family of conformal invariants}\label{sec_a_family_of_conformal_invariants}

Let us introduce a family of eigenvalue problems, from which we then define a family of conformal invariants. It is interesting to observe that these invariants turn out to be identical, see Proposition \ref{prop3.5} below. For the case of a closed manifold, the result is similar, see \cite{WZ25_zero_mode}.

We define for $a\in (0,1]$
\eq{
    L_g^a \coloneqq -a\frac{4(n-1)}{n-2}\Delta_g + R_g, \qquad B_g^a \coloneqq \frac{2a}{n-2}\frac{\partial}{\partial \nu} + H.
}
and consider the following weighted Robin-eigenproblem:
\eq{
    \begin{cases}
        L_g^a u = \mu_a(g,f)f^2u \quad\hbox{in}\ M,\\
        B_g^a u = 0 \quad\hbox{on}\ \partial M.
    \end{cases}
}
The boundary condition implies
\eq{
    \int_M u(-\Delta)u = \int_M \abs{\nabla u}^2 - \int_{\partial M} u\frac{\p u}{\p \nu} = \int_M \abs{\nabla u}^2 + \frac{n-2}{2a}\int_{\partial M}Hu^2.
}
The first eigenvalue is given by
\eq{\label{def_mu_a}
    \mu_a(g,f) &= \inf_{u\not\equiv0} \frac{\int_M uL_g^au \,\rd V_g + 2(n-1)\int_{\partial M} uB_g^au \,\rd s_g}{\int_M f^2 u^2 \,\rd V_g}\\
    &= \inf_{u\not\equiv0} \frac{\int_M \big( a \frac {4(n-1)}{n-2}\abs{\nabla u}_g^2 + R_g u^2 \big) \,\rd V_g + 2(n-1)\int_{\partial M}Hu^2 \rd s_g}{\int_M f^2 u^2 \,\rd V_g}.
}

\begin{definition}\label{def_gamma_a}
For any given $a\in (0, 1]$, we define a conformal invariant for the class $[g,f]$
\eq{
    \gamma_a([g,f]) \coloneqq \sup_{(\tilde g, f_{\tilde g})\in [g,f]} \mu_a (\tilde g, f_{\tilde g}).
}
    
\end{definition}

We will prove that all $\gamma_a$'s are actually the same.
First of all, we observe that the case $a=1$ is special, since $L^{1}_g = L_g$ is the conformal Laplacian and $B^{1}_g = B_g$ is the conformal boundary operator.

\begin{lemma}\label{lem_conformal_invariance_mu_1}
$\mu_1(g,f)$ is conformally invariant, namely $\mu_1(g,f)=\mu_1(\tilde{g},f_{\tilde{g}})$ for any $\tilde{g}=h^{\frac{4}{n-2}}g$.
\end{lemma}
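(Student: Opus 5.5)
The plan is to show that the Rayleigh quotient defining $\mu_1$ in \eqref{def_mu_a} is itself conformally invariant under the substitution $u\mapsto h^{-1}u$, with the weight $f$ transformed as in Definition \ref{Def2}. Taking the infimum over $u$ on both sides then gives the lemma. Throughout, let $\tilde g=h^{\frac{4}{n-2}}g$ and $f_{\tilde g}=h^{-\frac{2}{n-2}}f$, and for a test function $u$ set $\tilde u\coloneqq h^{-1}u$. The map $u\mapsto h^{-1}u$ is a bijection of $W^{1,2}\setminus\{0\}$, since $h$ is smooth and positive on the compact manifold $M$.

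\textbf{Denominator.} Using $\rd V_{\tilde g}=h^{\frac{2n}{n-2}}\rd V_g$, we get
\[
\int_M f_{\tilde g}^2\tilde u^2\,\rd V_{\tilde g}=\int_M h^{-\frac{4}{n-2}}f^2h^{-2}u^2h^{\frac{2n}{n-2}}\,\rd V_g=\int_M f^2u^2\,\rd V_g ,
\]
because $-\frac{4}{n-2}-2+\frac{2n}{n-2}=0$.

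\textbf{Numerator.} I would work with the first form of \eqref{def_mu_a}, namely $\int_M uL_gu+2(n-1)\int_{\p M}uB_gu$. For smooth $u$ this equals the energy $\int_M\big(\frac{4(n-1)}{n-2}|\nabla u|^2+R_gu^2\big)+2(n-1)\int_{\p M}Hu^2$ by integration by parts; the boundary term coming from $\int_M u(-\Delta u)$ is $-\int_{\p M}u\frac{\p u}{\p\nu}$, since $\nu$ is the inner normal. The conformal law \eqref{conformal_L_and_B} is to be read as $L_{\tilde g}(\tilde u)=h^{-\frac{n+2}{n-2}}L_g(h\tilde u)=h^{-\frac{n+2}{n-2}}L_gu$, and similarly for $B$. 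With this:
\begin{itemize}
\item Interior term: $\int_M\tilde uL_{\tilde g}\tilde u\,\rd V_{\tilde g}=\int_M h^{-1}u\,h^{-\frac{n+2}{n-2}}L_gu\,h^{\frac{2n}{n-2}}\rd V_g=\int_M uL_gu\,\rd V_g$, because the exponents $-1-\frac{n+2}{n-2}+\frac{2n}{n-2}$ sum to $0$.
\item Boundary term: the induced area form scales as $\rd s_{\tilde g}=h^{\frac{2(n-1)}{n-2}}\rd s_g$, so $\int_{\p M}\tilde uB_{\tilde g}\tilde u\,\rd s_{\tilde g}=\int_{\p M}h^{-1-\frac{n}{n-2}+\frac{2(n-1)}{n-2}}uB_gu\,\rd s_g$. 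The exponent again vanishes.
\end{itemize}
So the numerator is invariant for smooth $u$, and by density of smooth functions it extends to all of $W^{1,2}$, because the energy form is continuous on $W^{1,2}$ (the trace theorem controls the boundary integral). Combining numerator and denominator, the two Rayleigh quotients coincide pointwise under the bijection, and hence $\mu_1(\tilde g,f_{\tilde g})=\mu_1(g,f)$.

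\textbf{Main obstacle.} The only delicate point is the bookkeeping in the numerator: one must use the conformal Laplacian and the conformal boundary operator together, rather than the gradient form directly. In the gradient form the cross terms $\nabla h$ appear, and they cancel only after integrating by parts: the interior part produces $\Delta h$, which is absorbed by the change of $R$ via \eqref{scalar_curvature}, and the boundary part produces $\p_\nu h$, which is absorbed by the change of $H$ via \eqref{mean_curvature}. Writing the numerator in the $uL_gu+2(n-1)uB_gu$ form handles both absorptions at once. For $n=2$ the exponents in $\frac{4}{n-2}$ are meaningless, so the lemma implicitly takes $n\ge3$; the case $n=2$ would be treated separately with $\tilde g=e^{2w}g$ if needed.
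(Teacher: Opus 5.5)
Your proposal is correct and follows the paper's argument: the paper also writes the numerator as $\int_M uL_gu+2(n-1)\int_{\partial M}uB_gu$ and combines the conformal covariance of $L_g$ and $B_g$ with the same exponent count to get $I_{\tilde g,f_{\tilde g}}(u)=I_{g,f}(hu)$, which is your substitution $\tilde u=h^{-1}u$ (your density remark just makes the passage to $W^{1,2}$ explicit). One harmless slip: for the inner normal one actually has $\int_M u(-\Delta u)=\int_M\abs{\nabla u}^2+\int_{\partial M}u\,\partial_\nu u$, so the minus sign you write (as the paper does) and the plus sign in $B_g$ are right only when $\partial_\nu$ is read as the outward derivative, and that is the reading under which $B_g$ matches the mean-curvature transformation and the two forms of the numerator coincide.
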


\begin{proof}
Set
\eq{
    I_{g,f}(u) \coloneqq \frac{\int_M uL_gu \,\rd V_g + 2(n-1)\int_{\partial M} uB_gu \,\rd s_g}{\int_M f^2 u^2 \,\rd V_g}.
}
Using \eqref{conformal_L_and_B} we have
\eq{
    \int_M u L_{\tilde{g}}u \,\rd V_{\tilde{g}} &= \int_M h^{-1}(hu) \cdot h^{-\frac{n+2}{n-2}} L_{g}(hu)\cdot h^{\frac{2n}{n-2}} \,\rd V_{g} = \int_M (hu) L_g(hu) \,\rdV_g,\\
    \int_{\p M} u B_{\tilde{g}}u \,\rd s_{\tilde{g}} &= \int_{\p M} h^{-1}(hu) \cdot h^{-\frac{n}{n-2}} B_{g}(hu)\cdot h^{\frac{2n-2}{n-2}} \,\rd s_{g} = \int_{\p M} (hu) B_g(hu) \,\rd s_g
}
and
\eq{
    \int_M f_{\tilde{g}}^2u^2 \rd V_{\tilde{g}} &= \int_M h^{-\frac{4}{n-2}}f^2\cdot h^{-2}(hu)^2\cdot h^{\frac{2n}{n-2}} \rdV_g = \int_M f^2(hu)^2 \rdV_g.
}
Hence
\eq{\label{eq_lem3.2}
    I_{\tilde{g},f_{\tilde{g}}}(u) = I_{g,f}(hu).
}
Since
\eq{
    \mu_1(g,f) = \inf_{u\not\equiv0} I_{g,f}(u),
}
we complete the proof.

\end{proof}

In contrast, $\mu_a (g,f)$ $(0<a<1)$ is not conformally invariant. Nevertheless, its supreme over $[g,f]$, $\gamma_a([g,f])$, has the following property.

\begin{proposition}\label{gamma_a_all_same}
For any $a\in (0, 1]$,
\eq{
    \gamma_a([g,f]) = \gamma_1([g,f]).
}
Moreover, $\gamma_a([g,f])$ is achieved by a conformal metric $\tilde{g} \in [g]$.
\end{proposition}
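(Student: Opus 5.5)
We prove the two inequalities $\gamma_a\le\gamma_1$ and $\gamma_1\le\gamma_a$ separately, and the second argument will also exhibit the conformal metric at which the supremum is attained. By Lemma \ref{lem_conformal_invariance_mu_1}, $\gamma_1([g,f])=\mu_1(g,f)$, so it suffices to show $\mu_a(\tilde g,f_{\tilde g})\le \mu_1(g,f)$ for every $(\tilde g,f_{\tilde g})\in[g,f]$, together with one metric where equality holds.

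\textbf{Upper bound.} Fix $(\tilde g,f_{\tilde g})\in[g,f]$ and insert the test function $u\equiv 1$ into the quotient \eqref{def_mu_a} for $\mu_a(\tilde g,f_{\tilde g})$; we assume $\int f^2>0$. The gradient term vanishes, so
\begin{equation}
\mu_a(\tilde g,f_{\tilde g})\le \frac{\int_M R_{\tilde g}\,\rd V_{\tilde g}+2(n-1)\int_{\p M}H_{\tilde g}\,\rd s_{\tilde g}}{\int_M f_{\tilde g}^2\,\rd V_{\tilde g}}=I_{\tilde g,f_{\tilde g}}(1).
\end{equation}
Here we used that for constants $uL^a u=uL u$ and $uB^a u=uBu$. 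By \eqref{eq_lem3.2}, $I_{\tilde g,f_{\tilde g}}(1)=I_{g,f}(h)$. This bound involves a specific $h$, not the infimum, so it does not yet give $\mu_1$. Instead, compare the quotients for a general $u$: since $a\le 1$, the gradient term of $\mu_a$ is at most that of $\mu_1$, hence $\mu_a(\tilde g,f_{\tilde g})\le\mu_1(\tilde g,f_{\tilde g})=\mu_1(g,f)$. This gives $\gamma_a\le\gamma_1$.

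\textbf{Lower bound and attainment.} Let $u_1>0$ be the first eigenfunction of the weighted Robin problem for $\mu_1(g,f)$, that is, $L_gu_1=\mu_1f^2u_1$ in $M$ and $B_gu_1=0$ on $\p M$. Its existence and positivity come from the direct method, using that $f^2\not\equiv0$ with $f\in L^n$ so the denominator is controlled by the Sobolev embedding, followed by the usual $|u|$ trick and a Harnack/maximum principle; the regularity needed is the kind deferred to Appendix A. Set $\tilde g=u_1^{4/(n-2)}g$. By \eqref{scalar_curvature} and \eqref{mean_curvature} this metric satisfies $R_{\tilde g}=\mu_1 f_{\tilde g}^2$ and $H_{\tilde g}=0$. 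For any test function $v$, the quotient for $\mu_a(\tilde g,f_{\tilde g})$ is
\begin{equation}
\frac{a\frac{4(n-1)}{n-2}\int|\nabla v|^2_{\tilde g}+\mu_1\int f_{\tilde g}^2v^2}{\int f_{\tilde g}^2v^2}\ge \mu_1 .
\end{equation}
Hence $\mu_a(\tilde g,f_{\tilde g})\ge\mu_1(g,f)$, which gives $\gamma_a\ge\gamma_1$, and combined with the upper bound it shows the supremum is attained at this $\tilde g$.

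\textbf{Main obstacle.} The difficulty lies in the analytic step: $f$ may vanish, so $f^2$ is only a nonnegative $L^{n/2}$ weight. We need to know that the infimum $\mu_1$ is finite and attained by a positive, sufficiently regular $u_1$, so that $h=u_1$ is an admissible conformal factor. We would handle this with a constrained minimization on $\{\int f^2u^2=1\}$, where coercivity comes from Escobar's Sobolev–trace inequality, and then apply elliptic regularity for the Robin problem. If $u_1$ is not smooth enough, we would approximate by smooth positive $h$.
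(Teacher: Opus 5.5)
Your proposal is correct and follows the same route as the paper's proof, which defers to the closed-manifold argument of the earlier zero-mode paper. There, $\mu_a\le\mu_1$ termwise for $a\le 1$ together with the conformal invariance of $\mu_1$ gives $\gamma_a\le\gamma_1$, and the metric $\tilde g=u_0^{4/(n-2)}g$ built from the positive weighted first eigenfunction has $R_{\tilde g}=\gamma_1 f_{\tilde g}^2$ and $H_{\tilde g}=0$, so $\mu_a(\tilde g,f_{\tilde g})=\gamma_1$. Your opening test with $u\equiv 1$ is an unnecessary detour and can simply be deleted.
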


\begin{proof}
The proof is the same as that of \cite{WZ25_zero_mode}*{Proposition 3.5}. Here in this case, $\gamma_a([g,f])$ is achieved by $\tilde{g}=u_0^{\frac{4}{n-2}}g$, where the positive function $u_0$ is a weighted eigenfunction of $L_g$ given by
\eq{\label{weighted_first_eigenfunction}
    \begin{cases}
        L_g u_0 = \gamma_1([g,f])f^2u_0 \quad\hbox{in}\ M,\\
        B_g u_0 = 0 \quad\hbox{on}\ \partial M,
    \end{cases}
}
and $\tilde{g}$ has curvature
\eq{\label{constant_curvature}
    R_{\tilde{g}} = \gamma_1([g,f]) f_{\tilde{g}}^2, \quad H_{\tilde{g}} = 0.
}
  
\end{proof}

Finally we relate it to the relative Yamabe constant.

\begin{proposition}\label{prop3.5} 
For any $a \in (0, 1]$,
    \eq{\gamma_a ([g,f])=\gamma_1([g,f])  \geq Y(M,\p M,[g]).}
\end{proposition}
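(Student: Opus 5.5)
The equality $\gamma_a([g,f])=\gamma_1([g,f])$ is Proposition \ref{gamma_a_all_same}, so only $\gamma_1([g,f])\geq Y(M,\p M,[g])$ remains. The normalization $\|f\|_{L^n}=1$ from \eqref{0-form_eq} is conformally invariant (proof of Lemma \ref{conformal_invariance}), so it may be used for any representative of $[g,f]$. By Lemma \ref{lem_conformal_invariance_mu_1}, $\gamma_1([g,f])=\mu_1(g,f)$. It therefore suffices to show $\mu_1(g,f)\geq Y(M,\p M,[g])$ for a single convenient representative, or, by conformal invariance of both sides, for $(g,f)$ itself.

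For $n\ge3$ I would argue directly on the Rayleigh quotient \eqref{def_mu_a} with $a=1$. For $u\not\equiv0$, the numerator is exactly the numerator in \eqref{def_relative_Yamabe_constant}. By the definition of the relative Yamabe constant it is bounded below by $Y(M,\p M,[g])\,\|u\|_{L^{2n/(n-2)}}^2$.

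The next step depends on the sign of $Y$.

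\textbf{Case $Y(M,\p M,[g])\ge 0$.} H\"older's inequality with exponents $\frac n2$ and $\frac{n}{n-2}$ gives
\eq{
\int_M f^2u^2\,\rd V_g \le \|f\|_{L^n}^2\,\|u\|_{L^{2n/(n-2)}}^2=\|u\|_{L^{2n/(n-2)}}^2 .
}
Dividing, $I_{g,f}(u)\ge Y$ whenever the numerator is nonnegative. If the numerator is negative, the lower bound by $Y\|u\|^2_{2n/(n-2)}\ge 0$ would be contradicted, so that cannot occur. Taking the infimum over $u$ gives $\mu_1(g,f)\ge Y$.

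\textbf{Case $Y<0$.} I expect this to be the main obstacle, because the H\"older step reverses direction. The paper's applications assume $\lambda^2>0$, and then the inequality \eqref{ineq_thm1.1} is trivial when $Y<0$. So I would either restrict the proposition to $Y\ge0$ or note that for $Y<0$ the claim is only needed in this trivial regime. If a genuine argument is required, I would choose the representative from Proposition \ref{gamma_a_all_same}: take $\tilde g=u_0^{4/(n-2)}g$ with $R_{\tilde g}=\gamma_1 f_{\tilde g}^2$ and $H_{\tilde g}=0$. Testing the relative Yamabe quotient with $u\equiv1$ in $\tilde g$ gives
\eq{
Y\le \gamma_1\int f_{\tilde g}^2\,\rd V_{\tilde g}\,\rVol(\tilde g)^{-\frac{n-2}{n}}.
}
If $\gamma_1<0$, H\"older gives $\int f_{\tilde g}^2\le \rVol^{\frac{n-2}{n}}$, which yields $Y\le\gamma_1$, the wrong direction. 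So in the negative case I would rely on the sign observation above instead.

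For $n=2$, where $Y$ must be interpreted via the Gauss--Bonnet / Euler characteristic analogue as in B\"ar's case, I would follow the corresponding argument in \cite{WZ25_zero_mode}. Concretely, I would use the conformal factor $e^{2w}$ and the same H\"older-type comparison.
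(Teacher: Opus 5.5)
For $Y(M,\p M,[g])\ge 0$ your argument is correct and is essentially the paper's. The paper also reduces everything to H\"older's inequality $\int_M f^2u^2\,\rd V_g\le \|u\|_{L^{2n/(n-2)}}^2$ under $\|f\|_{L^n}=1$, but applies it only to the positive minimizer $u_0$ of $\mu_1(g,f)$. You apply it to every test function, which has two small advantages: you do not need $u_0$ to exist (not automatic for $f\in L^n$, since the embedding is critical), and the sign condition the step needs, a nonnegative numerator, becomes explicit.

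The paper's one-line proof uses that sign condition silently: to pass to the larger denominator it needs $\gamma_1\ge 0$. You are right not to try to remove it, because for $Y<0$ the inequality is false in general. Take an Escobar minimizer $g_0\in[g]$ normalized by $R_{g_0}\equiv Y$, $H_{g_0}=0$, $\rVol(g_0)=1$; it exists when $Y<0$. Testing $\mu_1$ with $u\equiv 1$ and using Lemma \ref{lem_conformal_invariance_mu_1} gives
\[
\gamma_1([g,f])=\mu_1(g_0,f_{g_0})\le \frac{Y}{\int_M f_{g_0}^2\,\rd V_{g_0}}\le Y ,
\]
because $0<\int_M f_{g_0}^2\,\rd V_{g_0}\le \|f_{g_0}\|_{L^n}^2\,\rVol(g_0)^{\frac{n-2}{n}}=1$. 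The last inequality is strict unless $|f_{g_0}|$ is constant. Concentrating $f$ near a point even drives $\gamma_1\to-\infty$. So the proposition needs the hypothesis $Y\ge 0$. This costs nothing downstream: for $Y<0$ the bound on $\lambda^2$ in Theorem \ref{main_thm} holds trivially, as you say.

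Your auxiliary computation in the negative case is wrong, however, and should be dropped. From $Y\le \gamma_1 s$ with $s=\int f_{\tilde g}^2\,\rd V_{\tilde g}\,\rVol(\tilde g)^{-\frac{n-2}{n}}\in(0,1]$ and $\gamma_1<0$, you only get $\gamma_1 s\ge\gamma_1$. Nothing about $Y$ versus $\gamma_1$ follows from that. Moreover, $Y\le\gamma_1$ would be exactly the claimed direction, not the wrong one. The correct comparison is the one above: test the weighted quotient $\mu_1$, not the Yamabe quotient, with constants in the Yamabe gauge.

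Finally, your $n=2$ paragraph is unnecessary. In this section $L_g^a$, $B_g^a$ and $\gamma_a$ are only defined for $n\ge 3$, and the surface case of Theorem \ref{main_thm} is handled separately via Gauss--Bonnet.
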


\begin{proof}
Let $u_0$ be the positive function achieving the infimum in \eqref{def_gamma_a} for $a=1$.
By H\"older's inequality we have
\eq{
    \int_M f^2u_0^2 \rdV_g \leq \Big( \int_M u_0^{\frac{2n}{n-2}} \rdV_g \Big)^{\frac{n-2}{n}} \Big( \int_M \abs{f}^n \rdV_g \Big)^{\frac{2}{n}} = \Big( \int_M u_0^{\frac{2n}{n-2}} \rdV_g \Big)^{\frac{n-2}{n}},
}
where we have used the normalization $\norm{f}_{L^n}=1$. In view of \eqref{def_relative_Yamabe_constant}, we complete the proof.

\end{proof}

\begin{theorem}\label{prop_gamma_a}
Let $(M^n,g)$ $(n\geq3)$ be  a compact spin manifold with boundary $\p M$. If $(M,g)$ admits a non-trivial solution $(\varphi,f)$ to \eqref{0-form_eq}, where $\varphi\in L^p$ with $p>\frac{n}{n-1}$ and $f\in L^n$, then
\eq{\label{ineq_thm3.6}
    \lambda^2 \geq \frac{n}{4(n-1)}\gamma_1([g,f]).
}
\end{theorem}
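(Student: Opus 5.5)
Since both sides of \eqref{ineq_thm3.6} are conformally invariant (Lemma \ref{conformal_invariance} and Definition \ref{def_gamma_a}), the natural move is to work in a well-chosen metric of $[g,f]$. The plan is a Hijazi-type argument adapted to the weight $f$ and to the chiral boundary condition. We fix $a\in(0,1]$, namely $a=\frac{n-2}{n-1}$, because that is the coefficient produced by the refined Kato inequality for twistor-free spinors. We then show $\lambda^2\ge \frac{n}{4(n-1)}\mu_a(g,f)$ for \emph{every} $(g,f)$ in the class. Taking the supremum and using Proposition \ref{gamma_a_all_same} ($\gamma_a=\gamma_1$) then finishes the proof.

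Concretely, set $u=\abs{\varphi}$, or rather $u_\epsilon=(\abs{\varphi}^2+\epsilon^2)^{1/2}$ to avoid zeros. We aim to bound the Rayleigh quotient in \eqref{def_mu_a} with test function $u$. The argument has three steps.

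\emph{Step 1.} Apply the integral Schr\"odinger–Lichnerowicz formula \eqref{integral_S-L}. By \eqref{vanishing_boundary_Dirac} the term $\<\D^{\p M}\varphi,\varphi\>$ vanishes on $\p M$, and $\abs{\D\varphi}^2=\lambda^2f^2\abs{\varphi}^2$. This gives
\[
\frac{n-1}{n}\lambda^2\int_M f^2\abs{\varphi}^2=\int_M\frac R4\abs{\varphi}^2+\int_M\abs{P\varphi}^2+\frac{n-1}{2}\int_{\p M}H\abs{\varphi}^2 .
\]
\emph{Step 2.} Bound $\int\abs{P\varphi}^2$ from below by a multiple of $\int\abs{\nabla\abs{\varphi}}^2$. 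Rather than the pure Kato inequality, it is better to use a conformally modified spinor: as in Hijazi, pass to a metric $\tilde g=\abs{\varphi}^{\frac{4}{n-1}}g$ (formally), or equivalently compute with $\psi=\abs{\varphi}^{-s}\varphi$. The identity $\abs{\nabla\varphi}^2=\frac1n\abs{\D\varphi}^2+\abs{P\varphi}^2$ together with the refined Kato inequality $\abs{\nabla\abs\varphi}^2\le\frac{n-1}{n}\abs{\nabla\varphi}^2$ for harmonic-type spinors is replaced by the Frank–Loss/WZ25 trick. There one writes the twistor energy of $\varphi$ in terms of $u=\abs\varphi^{\frac{n-2}{n-1}}$, and the twistor-free part gives exactly $\frac{n-2}{n-1}\cdot\frac{4(n-1)}{n-2}\abs{\nabla u}^2$ after rescaling. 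The result should be
\[
\lambda^2\int f^2u^2\ \ge\ \frac{n}{4(n-1)}\Big(\int a\tfrac{4(n-1)}{n-2}\abs{\nabla u}^2+R u^2+2(n-1)\int_{\p M}Hu^2\Big),
\]
where the boundary term also transforms correctly. The boundary terms arising from integrating by parts the mixed term $\<\nabla\varphi,\varphi\>$ must combine with $\frac{n-1}{2}H\abs\varphi^2$ to produce $2(n-1)Hu^2$ up to the factor $\frac{n}{4(n-1)}$. This works because $\p_\nu\abs{\varphi}^2=2\rRe\<\nabla_\nu\varphi,\varphi\>$, and $\<\nabla_\nu\varphi,\varphi\>$ can be rewritten via \eqref{boundary_Dirac} and \eqref{vanishing_boundary_nu_cdot}: the term $\<\nu\cdot\D\varphi,\varphi\>=\lambda f\<\nu\cdot\varphi,\varphi\>$ vanishes, so $\rRe\<\nabla_\nu\varphi,\varphi\>=\frac{n-1}2H\abs\varphi^2$.

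\emph{Step 3.} Divide by $\int f^2u^2$ and use the definition of $\mu_a$ to get $\lambda^2\ge\frac{n}{4(n-1)}\mu_a(g,f)$.

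The main obstacle is analytic. First, $\varphi$ is only in $L^p$ and $f$ only in $L^n$, so a regularity statement (the appendix) is needed to make $\varphi\in W^{1,q}$ with enough integrability for the integrations by parts and for $u$ to be admissible in $W^{1,2}$. Second, the zero set of $\varphi$ requires the $\epsilon$-regularization $u_\epsilon$, passing to the limit $\epsilon\to0$ by dominated convergence. I expect the delicate point to be checking that the weight $p>\frac n{n-1}$ bootstraps, via $\D\varphi=\lambda f\varphi$ with $f\in L^n$, to $\varphi\in L^{q}$ for all $q$ and hence $u\in W^{1,2}$. Near $p=\frac n{n-1}$ the Sobolev gain from $f\in L^n$ is borderline, so I would first try a small-$L^n$-norm absorption argument on local patches, as in the closed case of \cite{WZ25_zero_mode}, now with the elliptic chiral boundary condition.
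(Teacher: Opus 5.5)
Your architecture differs from the paper's. The paper passes to the metric $\tilde g=u_0^{\frac{4}{n-2}}g$ of Proposition~\ref{gamma_a_all_same}, in which $R_{\tilde g}=\gamma_1([g,f])f_{\tilde g}^2$ and $H_{\tilde g}=0$. Then \eqref{vanishing_boundary_Dirac} and \eqref{integral_S-L} give \eqref{crucial_integral_identity}, i.e. $0=\int_M\big(\gamma_1-\frac{4(n-1)}{n}\lambda^2\big)f^2\abs{\varphi}^2+4\int_M\abs{P\varphi}^2$. The inequality is read off directly, with no Kato inequality and no boundary bookkeeping. The contradiction hypothesis forces $f\varphi\equiv0$ and $P\varphi\equiv0$, hence $\nabla\varphi=0$, $\abs{\varphi}$ constant and $f\equiv0$. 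This route also isolates $\abs{P\varphi}^2$, which the equality discussion in Section~\ref{sec_proof_thm1.1} uses.

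You instead stay in an arbitrary metric of the class, bound $\mu_a$ with a test function built from $\varphi$, and recover the sharp constant through $\gamma_a=\gamma_1$. This dual route is viable (given Proposition~\ref{gamma_a_all_same}) and needs only $\varphi\in W^{1,2}$. As written, however, it is not proved. Step~2, the step that produces the gradient term of the $\mu_a$ quotient and hence fixes $a$, is asserted (``the result should be'') rather than derived, and the mechanism you describe does not fit Step~1.

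The identity of Step~1 involves $\int f^2\abs{\varphi}^2$, $\int R\abs{\varphi}^2$ and $\int_{\p M}H\abs{\varphi}^2$, so it can only feed the $\mu_a$ quotient with $u=\abs{\varphi}$. With $u=\abs{\varphi}^{\frac{n-2}{n-1}}$ the weights do not match. You would then need a pointwise Bochner inequality for $\Delta\abs{\varphi}$, multiplied by a power of $\abs{\varphi}$ and integrated. That route, with the refined Kato inequality $\abs{\nabla\abs{\varphi}}^2\le\frac{n-1}{n}\abs{P\varphi}^2$ and your boundary computation, actually gives $a=1$, not $\frac{n-2}{n-1}$. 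It also needs $\D^2\varphi$, hence derivatives of $f$, in a weak sense.

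The missing line is precisely the plain Kato inequality you set aside. Since $f$ is real, $\rRe\langle X\cdot\D\varphi,\varphi\rangle=\lambda f\,\rRe\langle X\cdot\varphi,\varphi\rangle=0$. Hence $\nabla_X\abs{\varphi}=\rRe\langle P_X\varphi,\varphi\rangle/\abs{\varphi}$ and $\abs{\nabla\abs{\varphi}}\le\abs{P\varphi}$ a.e. Inserting this into Step~1 and multiplying by $4$ gives, for $u=\abs{\varphi}\in W^{1,2}$,
\[
\frac{4(n-1)}{n}\lambda^2\int_M f^2u^2\ \ge\ \int_M\big(4\abs{\nabla u}^2+R\,u^2\big)+2(n-1)\int_{\p M}Hu^2 .
\]
This is the $\mu_a$ quotient with $4=a\frac{4(n-1)}{n-2}$, i.e. $a=\frac{n-2}{n-1}$. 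So it is the plain, not the refined, Kato inequality that produces this value. The boundary term comes straight out of Step~1, with no integration by parts and no $\epsilon$-regularization.

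You must also treat the case $f\varphi\equiv0$ separately, since there the quotient is undefined. It can be excluded in the metric of Proposition~\ref{gamma_a_all_same}, as above. With these two additions, taking the supremum over $[g,f]$ and using $\gamma_{\frac{n-2}{n-1}}=\gamma_1$ completes your argument.
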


\begin{proof}
First of all, Lemma \ref{lem_regularity_L^r} implies that $\varphi\in L^{\frac {2n}{n-2}}$, which in turn implies that $A\cdot \varphi \in L^2 $ and $\varphi \in W^{1,2}$.

We recall the integral Schr\"odinger-Lichnerowicz formula \eqref{integral_S-L}
\eq{
    \int_{\partial M} \Big( \<\D^{\partial M}\varphi,\varphi\> - \frac{n-1}{2}H\abs{\varphi}^2 \Big) = \int_M \Big( \frac{R}{4}\abs{\varphi}^2 - \frac{n-1}{n}\abs{\D\varphi}^2 \Big) + \int_M \abs{P\varphi}^2.
}
It is easy to show that \eqref{integral_S-L} is valid not only for smooth $\varphi$, but also for any $\varphi\in W^{1,2}$.

We now prove \eqref{ineq_thm3.6} by contradiction. Assume that $\lambda^2 \leq \frac{n}{4(n-1)}(\gamma_1([g,f]) - \delta)$
for some $\delta>0$. For any $a\in (0,1]$, by Proposition \ref{gamma_a_all_same}, $\tilde{g} = u_0^{\frac{4}{n-2}}g$ achieves the supreme in Definition \ref{def_gamma_a}, where $u_0$ is the weighted first eigenfunction given by \eqref{weighted_first_eigenfunction}. Hence by \eqref{constant_curvature} we have $R_{\tilde{g}}>0$, $H_{\tilde{g}}=0$, and
\eq{
    \mu_a(\tilde{g},f_{\tilde{g}}) = \gamma_a([g,f]) = \gamma_1([g,f]) \eqcolon Y.
}
Inserting \eqref{0-form_eq} \eqref{vanishing_boundary_Dirac} and \eqref{constant_curvature} into the integral Schr\"odinger-Lichnerowicz formula \eqref{integral_S-L} yields
\eq{\label{crucial_integral_identity}
    0 = \int_M \big( R_{\tilde{g}} - \frac{4(n-1)}{n}\lambda^2f^2 \big)\abs{\varphi_{\tilde{g}}}_{\tilde{g}}^2 +4 \int \abs{P_{\tilde{g}}\varphi_{\tilde{g}}}_{\tilde{g}}^2.
}
Now the conclusion follows from the same argument in the proof of \cite{WZ25_zero_mode}*{Theorem 3.8}.

\end{proof}

\section{Proof of Theorem \ref{main_thm}}\label{sec_proof_thm1.1}

In this section we prove our first main result.
We start with the following lemma. When $(M,g)=(\S^n,g_{{\rm st}})$, it was proved in \cite{WZ25}.

\begin{lemma}\label{lem4.1}
Let $(M,g)$ be a compact spin manifold with or without boundary. If $\varphi$ is a $\pm\frac{1}{2}$-Killing spinor and $\psi$ is a $\lambda$-eigenspinor of $\D$, then $\<\varphi,\psi\>$ is a $(\lambda^2\pm\lambda-\frac{n^2-2n}{4})$-eigenfunction of $-\Delta$.
\end{lemma}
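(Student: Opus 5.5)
We compute $\Delta\<\varphi,\psi\>$ pointwise; the identity is purely local, so boundary conditions play no role. Fix a point $x$ and a local orthonormal frame $\{e_j\}$ that is synchronous at $x$, i.e. $\nabla e_j(x)=0$. Write $\nabla_X\varphi=\pm\frac12 X\cdot\varphi$ and $\D\psi=\lambda\psi$, and denote $b=\pm\frac12$. Then
\begin{equation}
\Delta\<\varphi,\psi\> = \<\Delta\varphi,\psi\> + 2\sum_j\<\nabla_{e_j}\varphi,\nabla_{e_j}\psi\> + \<\varphi,\Delta\psi\>.
\end{equation}
The plan is to express each of the three terms as a multiple of $\<\varphi,\psi\>$.

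\emph{First term.} For a $b$-Killing spinor, $\nabla_{e_j}\nabla_{e_j}\varphi = b\,e_j\cdot\nabla_{e_j}\varphi = b^2 e_j\cdot e_j\cdot\varphi=-b^2\varphi$ at $x$. Summing over $j$ gives $\Delta\varphi=-nb^2\varphi=-\frac n4\varphi$.

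\emph{Third term.} By the Schr\"odinger--Lichnerowicz formula \eqref{pointwise_S-L}, $-\Delta\psi=\D^2\psi-\frac R4\psi=(\lambda^2-\frac R4)\psi$. A manifold carrying a nontrivial real $b$-Killing spinor is Einstein with $R=4n(n-1)b^2=n(n-1)$. I will quote this standard fact, or rederive it from $\D\varphi=-nb\varphi$ and \eqref{pointwise_S-L}, which give $\frac R4\varphi=\D^2\varphi+\Delta\varphi=n^2b^2\varphi-nb^2\varphi$. Hence $\Delta\psi=-(\lambda^2-\frac{n(n-1)}{4})\psi$.

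\emph{Middle term.} Using the anti-symmetry of Clifford multiplication,
\begin{equation}
\sum_j\<\nabla_{e_j}\varphi,\nabla_{e_j}\psi\> = b\sum_j\<e_j\cdot\varphi,\nabla_{e_j}\psi\> = -b\<\varphi,\D\psi\> = -b\lambda\<\varphi,\psi\>.
\end{equation}
Care is needed with the Hermitian convention: $\lambda$ and $b$ are real, so conjugation does not matter, and we may use $\rRe$ if needed. Since both $\varphi$ and $\psi$ satisfy linear real-coefficient equations, the complex inner product works directly.

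\emph{Combining.} Adding the three terms,
\begin{equation}
-\Delta\<\varphi,\psi\> = \Big(\frac n4 + 2b\lambda + \lambda^2 - \frac{n(n-1)}{4}\Big)\<\varphi,\psi\>= \Big(\lambda^2 + 2b\lambda - \frac{n^2-2n}{4}\Big)\<\varphi,\psi\>.
\end{equation}
With $2b=\pm1$, this gives exactly the claimed eigenvalue $\lambda^2\pm\lambda-\frac{n^2-2n}{4}$.

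\emph{Main obstacle.} The main point to get right is the sign bookkeeping: the sign of $\Delta$ (analyst's convention, matching \eqref{pointwise_S-L}), the Clifford relation $e_j\cdot e_j=-1$, and the sign from moving $e_j\cdot$ across the inner product. These must combine into $+2b\lambda$ rather than $-2b\lambda$. I would double-check this against the known case $(\S^n,g_{\rm st})$ from \cite{WZ25}.
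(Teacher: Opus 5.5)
Your proposal is correct and follows essentially the same argument as the paper. Both expand $\Delta\<\varphi,\psi\>$ by the product rule, use the Schr\"odinger--Lichnerowicz formula with $R=n(n-1)$ for the $\psi$-term, and use the anti-symmetry of Clifford multiplication to turn the cross term into $\pm\lambda\<\varphi,\psi\>$. The only cosmetic difference is that you compute $\Delta\varphi=-\frac n4\varphi$ directly from the Killing equation, whereas the paper obtains the same value as $-(\D^2\varphi-\frac R4\varphi)$.
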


\begin{proof}
Since $\varphi$ is a $\pm\frac{1}{2}$-Killing spinor, we have
\eq{
    \nabla_{e_j}\varphi = \pm\frac{1}{2}e_j\cdot\varphi, \qquad \D\varphi = \mp\frac{n}{2}\varphi.
}
Moreover, $(M,g)$ is Einstein and
\eq{
    {\rm Ric}=(n-1)g,\qquad R=n(n-1).
}
Using the Schr\"odinger-Lichnerowicz formula \eqref{pointwise_S-L} we have
\eq{
    -\Delta\<\varphi,\psi\> &= \<-\Delta\varphi,\psi\> + \<\varphi,-\Delta\psi\> - 2\<\pm\frac{1}{2}e_j\cdot\varphi,\nabla_{e_j}\psi\>\\
    &= \<\D^2\varphi-\frac{R}{4}\varphi,\psi\> + \<\varphi,\D^2\psi-\frac{R}{4}\psi\> \pm \<\varphi,\D\psi\>\\
    &= \Big( \frac{n^2}{4} - \frac{n^2-n}{4} + \lambda^2 - \frac{n^2-n}{4} \pm\lambda \Big)\<\varphi,\psi\>\\
    &= \Big(\lambda^2\pm\lambda-\frac{n^2-2n}{4}\Big)\<\varphi,\psi\>.
}
    
\end{proof}

\begin{lemma}\label{lem4.2}
    Let $G$ be the endomorphism given in Section \ref{sec2.3}. If $\varphi$ is a $\pm\frac{1}{2}$-Killing spinor, then $G\varphi$ is a $\mp\frac{1}{2}$-Killing spinor.
\end{lemma}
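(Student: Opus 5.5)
The argument is a direct computation from the defining properties \eqref{def_G} of $G$; it does not use the boundary condition. We differentiate $G\varphi$ along an arbitrary vector field $X\in\Gamma(TM)$. Since $\nabla G=0$, the endomorphism $G$ commutes with covariant differentiation, so
\eq{
    \nabla_X(G\varphi) = (\nabla_X G)\varphi + G(\nabla_X\varphi) = G(\nabla_X\varphi).
}
Inserting the Killing equation $\nabla_X\varphi=\pm\frac12 X\cdot\varphi$ gives
\eq{
    \nabla_X(G\varphi) = \pm\frac12\, G(X\cdot\varphi).
}
By the anticommutation relation $X\cdot G\varphi=-G(X\cdot\varphi)$ in \eqref{def_G}, we have $G(X\cdot\varphi)=-X\cdot G\varphi$. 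Therefore
\eq{
    \nabla_X(G\varphi)=\mp\frac12\, X\cdot (G\varphi), \quad\forall X\in\Gamma(TM),
}
which is exactly the $\mp\frac12$-Killing equation for $G\varphi$. Since $G^2=\rid$, $G$ is invertible, so $G\varphi$ is non-trivial whenever $\varphi$ is.

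There is no genuine obstacle here. The only point needing care is interpreting $\nabla G=0$ as the Leibniz identity $\nabla_X(G\psi)=G\nabla_X\psi$ for the induced connection on ${\rm End}(\Sigma M)$. For the canonical choice $G=\omega\cdot$ or $i\omega\cdot$ in even dimensions, this holds because the volume form is parallel and Clifford multiplication is parallel. As a consistency check, $\D(G\varphi)=\sum_j e_j\cdot G\nabla_{e_j}\varphi=-G\D\varphi=\pm\frac n2 G\varphi$, which matches the Dirac eigenvalue $\mp(\mp\frac n2)$ expected of a $\mp\frac12$-Killing spinor.
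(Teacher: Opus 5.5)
Your proof is correct and is exactly the paper's argument: the paper writes $\nabla_X(G\varphi)=G(\nabla_X\varphi)=G(\pm\frac12 X\cdot\varphi)=\mp\frac12 X\cdot G\varphi$, using $\nabla G=0$ and the anticommutation rule in \eqref{def_G}, just as you do. The only blemish is in your optional consistency check: the expected eigenvalue $\mp(\mp\frac n2)$ should read $-n\cdot(\mp\frac12)=\pm\frac n2$, which is what your computation of $\D(G\varphi)$ actually gives, and this does not affect the proof.
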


\begin{proof}
Using \eqref{def_G} we have
\eq{
    \nabla_X(G\varphi) = G(\nabla_X\varphi) = G(\pm\frac{1}{2}X\cdot\varphi) = \mp\frac{1}{2}X\cdot G\varphi, \quad\forall X\in\Gamma(TM).
}
Hence we complete the proof.
\end{proof}

Now we prove the main result, Theorem \ref{main_thm}.

\begin{proof}[Proof of Theorem \ref{main_thm} for $n\geq3$]
By Theorem \ref{prop_gamma_a} and Proposition \ref{prop3.5} we have
\eq{
    \lambda^2 \geq  \frac{n}{4(n-1)} \gamma_1([g,f]) \geq \frac{n}{4(n-1)} Y(M,\p M,[g]).
}
Now we prove the equality case.

$(1) \iff (2)$: Assume equality holds, i.e.
\eq{
    \lambda^2 = \frac{n}{4(n-1)} \gamma_1([g,f]) = \frac{n}{4(n-1)} Y(M,\p M,[g]).
} 
Recall \eqref{constant_curvature} that for some $\tilde{g}\in[g]$ we have
\eq{
    R_{\tilde{g}} = Y(M,\p M,[g])f_{\tilde{g}}^2 = \frac{4(n-1)}{n}\lambda^2f_{\tilde{g}}^2, \qquad H_{\tilde{g}}=0.
}
Thus \eqref{crucial_integral_identity} implies $P\varphi_{\tilde{g}}=0$, i.e., $\varphi_{\tilde{g}}$ is a twistor spinor, hence is smooth. We recall the identity for any twistor spinor $\psi$ (see \cite{Baum_Book_90})
\eq{\label{why_varphi_constant_lenth}
    \Delta \abs{\psi}^2 = \frac{R}{2(n-1)}\abs{\psi}^2 - \frac{2}{n}\abs{\D\psi}^2, \qquad \nabla_{\nu}\abs{\psi}^2 = -\frac{2}{n}\rRe\<\psi, \nu\cdot\D\psi\>.
}
Applying to $\psi=\varphi_{\tilde{g}}$  and to $\tilde g$ we see that (we omit the subscript $\tilde{g}$ for simplicity)
\eq{\label{eq1_case_Neumann}
    \Delta \abs{\varphi}^2 = \frac{2}{n}\lambda^2 f^2\abs{\varphi}^2 - \frac{2}{n}\lambda^2 f^2\abs{\varphi}^2 = 0,
}
and 
\eq{\label{eq2_case_Neumann}
    \nabla_{\nu}\abs{\varphi}^2 = -\frac{2}{n}\lambda f \rRe\<\varphi,\nu\cdot\varphi\> = 0.
}
Now \eqref{eq1_case_Neumann} and \eqref{eq2_case_Neumann} implies $\abs{\varphi}\equiv{\rm const}$.
By a well known result (see for instance \cite{Ginoux09}*{Proposition A.2.1})
it implies that $(M,\tilde{g})$ is Einstein. Therefore $R\equiv{\rm const}$, hence $f\equiv{\rm const}=\pm\rVol^{1/n}$. Now \eqref{0-form_eq} and the fact that $\varphi$ is a twistor spinor yield
\eq{
    \nabla_X\varphi = -\frac{1}{n}X\cdot\D\varphi = -\frac{1}{n}\lambda f X\cdot\varphi, \quad\forall X\in\Gamma(TM).
}
Therefore $\varphi=\varphi_{\tilde{g}}$ is a real Killing spinor.
     
For the reverse, if $\varphi$ is a $b$-Killing spinor with $b\not=0$, without loss of generality we may assume $b=\pm\frac{1}{2}$, then $R\equiv n(n-1)$, and an Obata-type uniqueness result for manifolds with boundary (see \cites{Escobar90, Akutagawa21}) implies that
\eq{
    Y(M,\p M, [g]) = R\cdot\rVol^{\frac{2}{n}} = n(n-1)\rVol^{\frac{2}{n}}.
}
and
\eq{
    \D\varphi = -nb\varphi = \mp\sqrt{\frac{nR}{4(n-1)}}\varphi = \mp\sqrt{\frac{n}{4(n-1)}Y(M,\p M,[g])}\cdot\rVol^{-\frac{1}{n}}\varphi \eqcolon \lambda f\varphi.
}
Hence the equality in \eqref{ineq_thm1.1} holds.

$(1) \iff (3)$: Assume equality holds. (1) shows that (up to a conformal transformation) $\varphi$ is a $\pm\frac{1}{2}$-Killing spinor. Hence (up to a conformal transformation) $(M,g)$ is Einstein and
\eq{
    {\rm Ric}=(n-1)g,\qquad R=n(n-1).
}
Moreover, Lemma \ref{lem4.2} implies that $G\varphi$ is a $\mp\frac{1}{2}$-Killing spinor, hence a $\pm\frac{n}{2}$-eigenspinor of $\D$. Thus Lemma \ref{lem4.1} implies
\eq{\label{eq1_Reilly_Dirichlet}
    -\Delta\<\varphi,G\varphi\> = n\<\varphi,G\varphi\>.
}
By \eqref{def_G} we see that $G$ is symmetric, hence $\<\varphi,G\varphi\>\in\mathbb{R}$. Using the boundary condition $\mathbf{B}\varphi = 0$ and \eqref{vanishing_boundary_nu_cdot} we see that
\eq{\label{eq2_Reilly_Dirichlet}
    \<\varphi,G\varphi\> = 0 \quad\hbox{on}\ \p M.
}
Moreover, applying \eqref{general_not_self_adjoint} to $\psi=G\varphi$ yields
\eq{
    n\int_M \<\varphi, G\varphi\> = \mp\int_{\p M}\<\nu\cdot G\varphi,\varphi\>.
}
The boundary condition $\mathbf{B}\varphi = 0$ ensures that the right-hand side is non-zero, hence
\eq{\label{eq3_Reilly_Dirichlet}
    \<\varphi,G\varphi\> \not\equiv 0.
}
Since ${\rm Ric} = (n-1)g$, a Lichnerowicz-Obata type result for the boundary case proved by Reilly \cite{Reilly77} implies that the first Dirichlet-eigenvalue of $-\Delta$ satisfies $\lambda_1(-\Delta)\geq n$ with equality if and only if $(M,g)$ is isometric to the hemisphere $(\S^n_+,g_{{\rm st}})$. Now \eqref{eq1_Reilly_Dirichlet}, \eqref{eq2_Reilly_Dirichlet} and \eqref{eq3_Reilly_Dirichlet} show that $\<\varphi,G\varphi\>$ is a non-zero $n$-Dirichlet-eigenfunction of $-\Delta$. Hence the conclusion follows.

For the reverse, assume that $(M,g)$ is conformally equivalent to the hemisphere $(\S^n_+,g_{{\rm st}})$. Since \eqref{0-form_eq} is conformally invariant, we may assume $(M,g)=(\S^n_+,g_{{\rm st}})$ and admits a non-trivial solution $(\varphi,f)$ to \eqref{0-form_eq} with $\varphi$ a $\pm\frac{1}{2}$-Killing spinor. We need to show that it achieves equality in \eqref{ineq_thm1.1}.
In fact, in this case we have $\D\varphi = \mp\frac{n}{2}\varphi$ and hence $\lambda f=\mp\frac{n}{2}$.
Using $\norm{f}_n=1$ and \eqref{relative_Yamabe_constant_hemisphere} we have
\eq{
    \lambda^2 = \norm{\lambda f}_n^2 = \norm{\frac{n}{2}}_n^2 = \frac{n^2}{4}\Big(\frac{\omega_n}{2}\Big)^{\frac{2}{n}} = \frac{n}{4(n-1)}Y(\S^n_+,\p\S^n_+).
}
Hence we complete the proof.

\end{proof}

\begin{remark}\label{rmk_generalization}
The proof shows how inequality \eqref{ineq_thm1.1} follows from \eqref{crucial_integral_identity}. It works more generally, see Section \ref{sec_generalization}.
In fact, one can similarly prove that inequality \eqref{ineq_thm1.1} holds true for any function $F\in L^n$: if there exists a non-trivial spinor field $\varphi$ such that
\eq{
    \abs{\D \varphi} \leq \abs{F}\abs{\varphi}\quad\hbox{in}\ M, \qquad \mathbf{B}\varphi = 0\quad\hbox{on}\ \p M,
}
then
\eq{
    \norm{F}_{L^n}^2 \geq \frac{n}{4(n-1)} Y(M,\p M,[g]).
}

\end{remark}

Now we prove the main result for $n=2$. In this case, every oriented surface is spin, and the relative Yamabe constant is $4\pi\chi(\Sigma)$.

\begin{proof}[Proof of Theorem \ref{main_thm} for $n=2$]
We only need to consider $\chi(\Sigma)>0$, hence $\Sigma$ is a topological disk and $\chi(\Sigma)=1$. 

Since $\varphi\in L^2$ and $A\in L^q$ with $q>2$, we have $A\cdot\varphi\in L^{\frac{2q}{q+2}}$. Using \eqref{0-form_eq} and the apriori estimate \eqref{apriori_estimate} we can see $\varphi\in L^q$. Therefore Lemma \ref{lem_regularity_L^r} implies that $\varphi\in L^r$ for any $r>2$. Using \eqref{apriori_estimate} again we have $\varphi\in W^{1,2}$.

Note that Lemma \ref{conformal_invariance} remains true. In fact, for $\tilde{g}=e^{2h}g$ we have
\eq{
    \D_{\tilde{g}}(e^{-\frac{1}{2}h}\tilde{\varphi}) = e^{-\frac{3}{2}h}\widetilde{\D_g\varphi}
}
and the others are the same. In 2-dimensional case we have the Gau\ss{} curvature and boundary geodesic curvature
\eq{
    K_{\tilde{g}} = e^{-2h}(-\Delta_g h + K_g), \qquad k_{\tilde{g}} = e^{-u}(\frac{\p h}{\p \nu} + k_g).
}
Recall our normalization $\int f^2 = 1$. By the Gau\ss{}-Bonnet theorem it follows
\eq{
    \int_\Sigma ( 2\pi f^2 - K_g ) + \int_{\p \Sigma} -k_g = 0.
}
Hence there exists some $u_0$ solving
\eq{
    \begin{cases}
        -\Delta_g u_0 + K_g = 2\pi f^2 \quad\hbox{in}\ M,\\
        \frac{\p u_0}{\p \nu} = -k_g \quad\hbox{on}\ \p M.
    \end{cases}
}
For $\tilde{g}=e^{2u_0}g$ we have
\eq{\label{dim_2_eq1}
    \frac{1}{2}R_{\tilde{g}} = K_{\tilde{g}} = e^{-2u_0}(-\Delta_g u_0+K_g) = e^{-2u_0}2\pi f^2 = 2\pi f_{\tilde{g}}^2
}
and
\eq{
    k_{\tilde{g}} = e^{-u_0}(\frac{\p u_0}{\p \nu} + k_g) = 0,
}
where we have used the analogy of \eqref{conformal_f}.
Together with \eqref{vanishing_boundary_Dirac} and the integral Schr\"odinger-Lichnerowicz formula \eqref{integral_S-L} we have (from now on we omit the subscript $\tilde{g}$)
\eq{\label{eq_thm4.5}
    0 = \int_\Sigma \Big( \frac{K}{2}\abs{\varphi}^2 - \frac{1}{2}\abs{\D\varphi}^2 \Big) + \int_\Sigma \abs{P\varphi}^2 \geq \int_\Sigma \Big( \pi - \frac{1}{2}\lambda^2 \Big) f^2\abs{\varphi}^2.
}
Hence $\lambda^2\geq 2\pi = 2\pi\chi(\Sigma)$. The equality case is treated similarly as in the case $n\geq3$.
    
\end{proof}


\section{Generalizations}\label{sec_generalization}

\subsection{Generalizations of the Dirac equation}

We view a function $f$ as an endomorphism acting on spinor fields by scalar multiplication. Our approach can be generalized to deal with a general symmetric endomorphism. We consider
\eq{\label{endomorphism_eq}
    \begin{cases}
        \D\varphi = \lambda H\varphi \quad\hbox{in}\quad M,\\
        \mathbf{B}^\pm \varphi = 0 \quad\hbox{on}\quad \partial M,
    \end{cases}
}
where $H\in \Gamma({\rm End}^{{\rm sym}}(\Sigma M))$ is symmetric and normalized as $\int_M \norm{H}^n = 1$. Here we denote by $\norm{H}$ the operator norm of $H$ and by $\norm{H}_n$ the $L^n$-norm of $\norm{H}$. 

For a general $H\in \Gamma({\rm End}^{{\rm sym}}(\Sigma M,g))$, we have the similar conformal invariance.
Let $\tilde{g}=e^{2h}g$. The isometry $(TM,g) \to (TM, \tilde{g})$ with $X\mapsto \tilde{X}=e^{-h}X$ induces a canonical isometry $F: (\Sigma M,g) \to (\Sigma M, \tilde{g})$ with $\varphi \mapsto \tilde{\varphi}$. We define the induced $\tilde{H}$ by
\eq{
    \tilde{H} \coloneqq F \circ H \circ F^{-1} : \Gamma(\Sigma M,\tilde{g})\to \Gamma(\Sigma M,\tilde{g}).
}
By definition we have
\eq{
    \tilde{H}\tilde{\varphi} = \widetilde{H\varphi},
}
hence $\tilde{H}\in\Gamma({\rm End}^{{\rm sym}}(\Sigma M,\tilde{g}))$ and $\norm{\tilde{H}}_{\tilde{g}}=\norm{H}_g$.

\begin{lemma}[Conformal invariance]
Equation \eqref{endomorphism_eq} is conformally invariant in the following sense: Let $(\varphi,H)$ be a solution to \eqref{endomorphism_eq} w.r.t. metric $g$, and $\tilde g = e^{2h} g$.
Define 
\eq{\label{conformal_H}
     \varphi_{\tilde{g}} \coloneqq e^{-\frac{n-1}{2}h}\tilde{\varphi}, \qquad H_{\tilde{g}} \coloneqq e^{-h}\tilde{H}.
}
Then $(\varphi_{\tilde g},H_{\tilde g})$ is a solution to \eqref{0-form_eq} w.r.t. metric $\tilde{g}$ with the same $\lambda$.
\end{lemma}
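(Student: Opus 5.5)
The plan is to mirror the proof of Lemma \ref{conformal_invariance}, replacing the scalar weight $f$ by the endomorphism $H$ and using the parametrization $\tilde g=e^{2h}g$ instead of $h^{\frac{4}{n-2}}g$. There are three things to verify: the interior equation, the normalization $\norm{H_{\tilde g}}_n=1$, and the boundary condition.

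\textbf{Interior equation.} In the exponential parametrization the conformal covariance \eqref{conformal_Dirac} reads
\eq{
    \D_{\tilde g}\big(e^{-\frac{n-1}{2}h}\tilde\varphi\big)=e^{-\frac{n+1}{2}h}\widetilde{\D_g\varphi}.
}
One obtains this by substituting $e^{h}=\hat h^{\frac{2}{n-2}}$ for $n\ge3$; for $n=2$ it is the formula already recorded in the proof of Theorem \ref{main_thm}. Applying it with $\D_g\varphi=\lambda H\varphi$ and using $\widetilde{H\varphi}=\tilde H\tilde\varphi$ gives
\eq{
    \D_{\tilde g}\varphi_{\tilde g}=\lambda e^{-\frac{n+1}{2}h}\tilde H\tilde\varphi=\lambda\big(e^{-h}\tilde H\big)\big(e^{-\frac{n-1}{2}h}\tilde\varphi\big)=\lambda H_{\tilde g}\varphi_{\tilde g}.
}
The middle step is valid because $\tilde H$ is linear over functions, so the scalar factor $e^{-\frac{n-1}{2}h}$ commutes with it. Symmetry of $H_{\tilde g}$ is inherited from $\tilde H$, since $F$ is a fibrewise isometry and $e^{-h}$ is a real scalar.

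\textbf{Normalization.} Since $F$ is an isometry, $\norm{\tilde H}_{\tilde g}=\norm{H}_g$, so $\norm{H_{\tilde g}}_{\tilde g}=e^{-h}\norm{H}_g$. With $\rd V_{\tilde g}=e^{nh}\rd V_g$ we get $\int_M\norm{H_{\tilde g}}^n\rd V_{\tilde g}=\int_M\norm{H}^n\rd V_g=1$.

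\textbf{Boundary condition.} The computation is identical to the one in Lemma \ref{conformal_invariance}. The inner unit normal transforms as $\tilde\nu=e^{-h}\nu$, and $\widetilde{\nu\cdot G\varphi}=\tilde\nu\,\tilde\cdot\,\tilde G\tilde\varphi$ with $\tilde G=F\circ G\circ F^{-1}$, which is a chiral operator for $\tilde g$. Hence $\tilde{\mathbf B}^\pm\varphi_{\tilde g}=e^{-\frac{n-1}{2}h}\widetilde{\mathbf B^\pm\varphi}=0$ on $\p M$.

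\textbf{Expected obstacle.} The only nontrivial ingredient is the covariance formula with exponent $-\frac{n-1}{2}h$ and factor $e^{-\frac{n+1}{2}h}$, which is standard. Beyond that, the one point needing care is that $H$ commutes with scalar functions but not with Clifford multiplication. The argument never needs to move $H$ past a Clifford product, so this causes no difficulty. As a last check, the statement's reference to \eqref{0-form_eq} should be read as \eqref{endomorphism_eq} with $H_{\tilde g}$ in place of $H$.
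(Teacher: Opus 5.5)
Your proposal is correct and follows the paper's proof essentially line by line. It makes the same three checks: the interior equation via the exponential form of \eqref{conformal_Dirac}, the $L^n$ normalization via $\norm{\tilde H}_{\tilde g}=\norm{H}_g$ and $\rd V_{\tilde g}=e^{nh}\rd V_g$, and the boundary condition via $\tilde G=F\circ G\circ F^{-1}$. Your remark that the conclusion should refer to \eqref{endomorphism_eq} (with $H_{\tilde g}$) rather than \eqref{0-form_eq} is also right.
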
 

\begin{proof}
In fact, \eqref{conformal_Dirac} and \eqref{endomorphism_eq} imply
\eq{
    \D_{\tilde{g}}\varphi_{\tilde{g}} = e^{-\frac{n+1}{2}h}\widetilde{\D_g\varphi} = \lambda\, e^{-\frac{n+1}{2}h}\tilde{H}\tilde{\varphi} = \lambda\, e^{-\frac{n+1}{2}h}\cdot e^{h}H_{\tilde{g}}\cdot e^{\frac{n-1}{2}h}\varphi_{\tilde{g}} = \lambda H_{\tilde{g}}\varphi_{\tilde{g}},
}
where
\eq{
    \int \norm{H_{\tilde{g}}}_{\tilde{g}}^n  \rd V_{\tilde{g}} = \int \norm{e^{-h}H}_g^n \cdot e^{nh}\, \rd V_g = \int \norm{H}_g^n \rd V_g = 1.
}
Moreover, since $\mathbf{B}\varphi=0$ on $\p M$, we have
\eq{
    \tilde{\mathbf{B}}\varphi_{\tilde{g}} = \frac{1}{2}(\rid \pm \tilde{\nu}\,\tilde{\cdot}\, \tilde{G})\varphi_{\tilde{g}} = e^{-\frac{n-1}{2}h} \frac{1}{2} ( \tilde{\varphi} \pm \widetilde{ \nu\cdot G\varphi } ) = e^{-\frac{n-1}{2}h} \widetilde{\mathbf{B}\varphi} = 0 \quad\hbox{on}\ \p M.
}

\end{proof}

Note that by the definition of operator norm we have
\eq{\label{def_operator_norm}
    \norm{H} = \sup_{\varphi\not\equiv0} \frac{\abs{H\varphi}}{\abs{\varphi}}.
}
Hence for any spinor field $\varphi$ we have
\eq{
    \abs{H\varphi} \leq \norm{H}\cdot\abs{\varphi}.
}
In view of Remark \ref{rmk_generalization}, using the same argument as in Section \ref{sec_a_family_of_conformal_invariants} we can prove the following theorem.

\begin{theorem}\label{generalization_thm}
Let $(M^n,g)$ $(n\geq2)$ be a compact spin manifold with boundary $\p M$. If $(M,g)$ admits a non-trivial solution $(\varphi,H)$ to \eqref{endomorphism_eq}, where $\varphi\in L^p$ with $p>\frac{n}{n-1}$ and $\norm{H}\in L^n$ with $\norm{H}_n = 1$, then
\eq{\label{ineq_thm5.2}
    \lambda^2 \geq \frac {n}{4(n-1)} Y(M,\p M,[g]).
}
Equality holds if and only if (up to a conformal transformation) $\varphi = \varphi_+ + \varphi_-$, where $\varphi_\pm$ is a $\mp \frac{1}{2}$-Killing spinor with $\rRe\<\varphi_+,\varphi_-\>=0$, and $\varphi$ achieves the supreme in \eqref{def_operator_norm}.

\end{theorem}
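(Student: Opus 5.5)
The plan is to repeat the scheme of Sections \ref{sec_a_family_of_conformal_invariants} and \ref{sec_proof_thm1.1}, with the scalar weight $f^2$ replaced by $\norm{H}^2$. By Remark \ref{rmk_generalization}, it is enough to work with the pointwise inequality $\abs{\D\varphi}=\lambda\abs{H\varphi}\le \lambda\norm{H}\abs{\varphi}$.

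First, regularity. Lemma \ref{lem_regularity_L^r} applies since $\norm{H}\in L^n$, and gives $\varphi\in L^{\frac{2n}{n-2}}$ and hence $\varphi\in W^{1,2}$. For $n=2$ we argue as in the proof of Theorem \ref{main_thm} for $n=2$.

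Next, set $F\coloneqq\norm{H}\in L^n$, which has $\norm{F}_n=1$. We take the conformal metric $\tilde g=u_0^{\frac{4}{n-2}}g$ from Proposition \ref{gamma_a_all_same}, with $f$ replaced by $F$. Then $R_{\tilde g}=\gamma_1([g,F])F_{\tilde g}^2$ and $H_{\tilde g}=0$. By the conformal invariance lemma, $\varphi_{\tilde g}$ solves the transformed problem with $H_{\tilde g}$. Moreover $\norm{H_{\tilde g}}=e^{-h}\norm{H}$, which transforms exactly like $F_{\tilde g}$.

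We then insert this into \eqref{integral_S-L}. The boundary term vanishes by \eqref{vanishing_boundary_Dirac} and $H_{\tilde g}=0$. This gives
\[
0=\int_M\Big(\tfrac{R_{\tilde g}}{4}\abs{\varphi_{\tilde g}}^2-\tfrac{n-1}{n}\lambda^2\abs{H_{\tilde g}\varphi_{\tilde g}}^2\Big)+\int_M\abs{P\varphi_{\tilde g}}^2
\ge \int_M\Big(\tfrac{\gamma_1}{4}-\tfrac{n-1}{n}\lambda^2\Big)F_{\tilde g}^2\abs{\varphi_{\tilde g}}^2 .
\]
The argument of \cite{WZ25_zero_mode}*{Theorem 3.8}, which uses the family $\gamma_a$, then yields $\lambda^2\ge\frac{n}{4(n-1)}\gamma_1([g,F])$. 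Proposition \ref{prop3.5} gives $\gamma_1([g,F])\ge Y(M,\p M,[g])$, which proves \eqref{ineq_thm5.2}.

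For the equality case, all inequalities above must be equalities. This forces three things: $P\varphi_{\tilde g}=0$, $\abs{H_{\tilde g}\varphi_{\tilde g}}=\norm{H_{\tilde g}}\abs{\varphi_{\tilde g}}$ almost everywhere (i.e.\ $\varphi$ achieves the supremum in \eqref{def_operator_norm}), and equality in Hölder, so $F_{\tilde g}$ is constant. So $\varphi$ is a twistor spinor on a metric with constant positive scalar curvature $n(n-1)$ after normalization, and $H_{\tilde g}=0$ on the boundary. We then use the standard decomposition of twistor spinors on such manifolds, $\varphi=\varphi_++\varphi_-$ with $\varphi_\pm\coloneqq\frac12\big(\varphi\pm\frac{1}{n}\cdot\tfrac{2}{n}\ldots\big)$. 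More precisely,
\[
\varphi_\pm=\tfrac12\Big(\varphi\mp\tfrac{2}{n}\D\varphi\Big),
\]
and $\D^2\varphi=\frac{n R}{4(n-1)}\varphi$ makes these $\mp\frac12$-Killing spinors.

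The condition $\rRe\<\varphi_+,\varphi_-\>=0$ should come from the following computation. Since $\abs{\D\varphi}=\lambda\norm{H}\abs{\varphi}=\frac n2\abs{\varphi}$, we get $\abs{\varphi_+}^2-\abs{\varphi_-}^2$ equal to a constant times $\rRe\<\varphi,\D\varphi\>$. Also $\abs{\varphi_+}^2+\abs{\varphi_-}^2=\frac12(\abs{\varphi}^2+\frac{4}{n^2}\abs{\D\varphi}^2)=\abs{\varphi}^2$, and expanding $\abs{\varphi}^2=\abs{\varphi_++\varphi_-}^2$ then gives $\rRe\<\varphi_+,\varphi_-\>=0$.

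The converse is a direct check. Given such $\varphi$, set $H\varphi\coloneqq\lambda^{-1}\D\varphi$, defined by a symmetric endomorphism, e.g.\ a reflection-type map exchanging the roles of $\varphi_\pm$. The identity $\abs{\D\varphi}=\frac n2\abs{\varphi}$ holds precisely because $\rRe\<\varphi_+,\varphi_-\>=0$. Equality then holds through Escobar's Obata-type result, as in the proof of Theorem \ref{main_thm}.

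I expect the main obstacle to be this equality characterization. Specifically, the difficulty is showing that $\tilde g$ is Einstein: the constant-length argument of \eqref{why_varphi_constant_lenth} does not apply directly, because $\D\varphi\ne c\varphi$. I would try first to use the fact that $F_{\tilde g}$ is constant, so that $R_{\tilde g}$ is constant, and then apply the twistor identity $\D^2\varphi=\frac{nR}{4(n-1)}\varphi$ to obtain the Killing decomposition.
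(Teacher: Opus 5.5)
Your inequality part follows the paper's route: Remark \ref{rmk_generalization} with $F=\norm{H}$, the metric of Proposition \ref{gamma_a_all_same}, and \eqref{crucial_integral_identity}. In the equality case you also correctly extract $P\varphi=0$, $\abs{H\varphi}=\norm{H}\abs{\varphi}$ and $\lambda\norm{H}\equiv\frac n2$, with $R\equiv n(n-1)$ and minimal boundary for $\tilde g$.

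The gap is the passage from there to the Killing decomposition. The identity $\D^2\varphi=\frac{nR}{4(n-1)}\varphi$ only makes $\varphi_\pm$ eigenspinors with eigenvalues $\pm\frac n2$. To be Killing spinors they must also be twistor spinors, i.e.\ $\D\varphi$ must be a twistor spinor. For any twistor spinor one has $\nabla_X\D\varphi=\frac n2K(X)\cdot\varphi$ with $K=\frac1{n-2}\big(\frac{R}{2(n-1)}g-{\rm Ric}\big)$. So $\D\varphi$ is a twistor spinor exactly when $K=-\frac{R}{2n(n-1)}g$, i.e.\ when $\tilde g$ is Einstein. Your suggested fix (constant $R$ plus the $\D^2$ identity) is therefore circular, and as written the forward direction of the equality case is not proved.

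The missing idea is that the constant-length argument from the proof of Theorem \ref{main_thm} does apply, contrary to your remark.

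\textbf{Interior.} The identity \eqref{why_varphi_constant_lenth} involves only $\abs{\D\varphi}$. You already have $\abs{\D\varphi}=\frac n2\abs{\varphi}$, so $\Delta\abs{\varphi}^2=0$ exactly as in \eqref{eq1_case_Neumann}.

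\textbf{Boundary.} What really changes is the term $\nabla_\nu\abs{\varphi}^2=-\frac2n\rRe\<\nu\cdot\D\varphi,\varphi\>$. It no longer vanishes trivially, since $\D\varphi=\lambda H\varphi$ is not a multiple of $\varphi$. It vanishes nonetheless. Plugging the twistor equation $\nabla_\nu\varphi=-\frac1n\nu\cdot\D\varphi$ and minimality of $\p M$ into \eqref{boundary_Dirac} gives $\D^{\p M}\varphi=-\frac{n-1}{n}\nu\cdot\D\varphi$ on $\p M$. The pointwise identity \eqref{vanishing_boundary_Dirac} then yields $\<\nu\cdot\D\varphi,\varphi\>=0$ there.

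\textbf{Conclusion.} Hence $\abs{\varphi}$ is a nonzero constant. A twistor spinor of constant length forces Einstein: differentiate $\rRe\<Y\cdot\D\varphi,\varphi\>=0$ to get $K=-\frac{2\abs{\D\varphi}^2}{n^2\abs{\varphi}^2}g=-\frac12 g$. Only then do your decomposition and your computation of $\rRe\<\varphi_+,\varphi_-\>=0$ go through.

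Two minor points:
\begin{enumerate}
\item With $\varphi_\pm=\frac12(\varphi\mp\frac2n\D\varphi)$, $\varphi_+$ is a $+\frac12$-Killing spinor, the opposite of the statement's labeling.
\item In the converse, $H$ is given, so nothing needs to be constructed. What you need is that $\abs{\varphi}^2=\abs{\varphi_+}^2+\abs{\varphi_-}^2$ is a positive constant, whence $\lambda\norm{H}\equiv\frac n2$, before invoking the boundary Obata-type result.
\end{enumerate}
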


\begin{proof}
The proof is the same as that of \cite{WZ25_zero_mode}*{Theorem 5.10}. The only difference is, instead of the classical Obata's rigidity, we need an analogical result for manifolds with boundary (see \cites{Escobar90, Akutagawa21}).

\end{proof}

\begin{remark}
Here for the rigidity, it is difficult to go further. In fact, for $\varphi = \varphi_+ + \varphi_-$, if we normalize $b=-\frac{1}{2}$ and ${\rm Ric}=(n-1)g$ as above, then for
\eq{
   \bar{\varphi} \coloneqq \varphi_+ - \varphi_-,
}
one can easily check that  $G\bar{\varphi}=G\varphi_+-G\varphi_-$ is also a twistor spinor and
\eq{
    \D\varphi = \frac{n}{2}\varphi, \quad \D\bar{\varphi} = \frac{n}{2}\bar{\varphi}, \quad \D G\varphi = -\frac{n}{2}G\bar{\varphi}, \quad \D G\bar{\varphi} = -\frac{n}{2}G\varphi.
}
Therefore
\eq{
    -\Delta \<G\varphi,\bar{\varphi}\> = n\<G\varphi,\bar{\varphi}\> \quad\hbox{with}\quad \<G\varphi,\bar{\varphi}\> \not\equiv0.
}
However, the function $\<G\varphi,\bar{\varphi}\>$ does not satisfy a suitable boundary condition, which prevents us to apply any known version of Obata-type rigidity.    
\end{remark}

\textbf{Example 1:} If $H = iA\cdot$, where $A$ is a vector field and $\cdot$ is the Clifford multiplication, then
\eq{
    \norm{H} = \abs{A}.
}
The corresponding equation is called the zero mode equation, see for instance \cites{Loss_Yau_86, FL1}. For the case of a closed manifold, the sharp lower bound of $\lambda^2$ was proved by \cites{Frank_Loss_2024, Reuss25, WZ25_zero_mode}.

\textbf{Example 2:} If $H = i\alpha\cdot$, where $\alpha$ is a differential $2$-form, then
\eq{
    \norm{H} = \big[\frac{n}{2}\big]^{\frac{1}{2}}\cdot\abs{\alpha}.
}
The corresponding equation was studied in \cite{WZ25_zero_mode}, where the similar sharp lower bound for a closed manifold was proven.

\subsection{Generalizations of the boundary condition}

The same inequalities hold true for another boundary condition, the MIT bag boundary condition, defined by
\eq{
    \mathbf{B}_{{\rm MIT}}^\pm\varphi \coloneqq \frac{1}{2}(\rid \pm i\nu\cdot)\varphi = 0.
}
More generally, we consider the $J$-boundary condition defined by (see for instance \cite{CJSZ2018})
\eq{
    \mathbf{B}_{J}^\pm\varphi \coloneqq \frac{1}{2}(\rid \pm \nu\cdot J)\varphi = 0,
}
where $J\in \Gamma({\rm End}^{{\rm anti}}(\Sigma M))$ satisfies
\eq{
    J^2 = -\rid, \quad J^* = -J, \quad \nabla J = 0, \quad X\cdot J\varphi = J(X\cdot\varphi), \quad\forall X\in\Gamma(TM).
}
In particular, $J=i$ is a canonical choice which gives the MIT bag.
Another choice is $J=iG_1G_2$, where $G_1,G_2$ are chiral endomorphisms given in \eqref{def_G} with $[G_1,G_2]=0$.

The MIT bag and $J$-boundary conditions are also local elliptic, and one can easily check all the crucial properties, \eqref{vanishing_boundary_Dirac} and Lemma \ref{conformal_invariance} remain true. 
The only difference in this case is that \eqref{vanishing_boundary_nu_cdot} is not true. In fact, it is easy to check that $f$ or $H$ here need to be complex, instead of real.
Therefore we similarly have the inequalities in Theorem \ref{main_thm} and Theorem \ref{generalization_thm}.


\section{An application}

In this section we give an interesting application of Theorem \ref{main_thm}.

On a compact spin manifold with boundary, we consider the energy functional
\eq{
    \mathcal{L}(\varphi) \coloneqq \frac{1}{2}\int_M \<\D\varphi,\varphi\> \rdV_g - \frac{n-1}{2n} \int_M \abs{\varphi}^{\frac{2n}{n-1}} \rdV_g + \frac{1}{2}\int_{\p M}\< \nu\cdot\mathbf{B}\varphi,\varphi\> \rd s_g.
}
The choice of the boundary term comes from the following fact:
\eq{
    (\psi,\varphi) \mapsto \int_M \<\D\psi,\varphi\> \rdV_g + \int_{\p M}\< \nu\cdot\mathbf{B}\psi,\varphi\> \rd s_g
}
is a symmetric bilinear form. This can be easily checked by using \eqref{general_not_self_adjoint}. Consequently, $J(\varphi)$ is a real number. Given any variation $\phi \coloneqq \frac{\rd\varphi_t}{\rd t}|_{t=0}$ with $\varphi_0=\varphi$, the first variation of $\mathcal{L}$ is
\eq{
    \frac{\rd }{\rd t}\Big|_{t=0}  \mathcal{L}(\varphi_t) = \int_M \rRe\< \D\varphi - \abs{\varphi}^{\frac{2}{n-1}}\varphi, \phi \> + \frac{1}{2}\int_M \rRe\< \nu\cdot\mathbf{B}\varphi,\phi\>.
}
Therefore, the corresponding Euler-Lagrange equation is
\eq{
    \begin{cases}\label{E-L-eq2}
        \D\varphi = \abs{\varphi}^{\frac{2}{n-1}}\varphi\quad\hbox{in}\ M, \\
        \mathbf{B}\varphi = 0 \quad\hbox{on}\ \p M.
    \end{cases}
}

As an application of Theorem \ref{main_thm}, we immediately have the following energy gap.

\begin{theorem}
For any solution $\varphi$ to \eqref{E-L-eq2} we have
\eq{
    \mathcal{L}(\varphi) \geq \frac{1}{2n}\left(\frac{n}{4(n-1)}Y(M,\p M,[g])\right)^{\frac{n}{2}}. 
}
Equality holds if and only if (up to a conformal transformation) $(M,g)=(\S^n_+,g_{{\rm st}})$ and $\varphi$ is a $-\frac{1}{2}$-Killing spinor.
\end{theorem}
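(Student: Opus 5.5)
The plan is to compute the energy of a solution explicitly and then apply Theorem \ref{main_thm} with the weight $f=\abs{\varphi}^{\frac{2}{n-1}}$, suitably normalized.

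\emph{Step 1: evaluate the energy on a solution.} Let $\varphi$ solve \eqref{E-L-eq2}. The boundary term in $\mathcal{L}$ vanishes because $\mathbf{B}\varphi=0$. Pairing the equation with $\varphi$ gives $\int_M\<\D\varphi,\varphi\>=\int_M\abs{\varphi}^{\frac{2n}{n-1}}$. Hence
\[
\mathcal{L}(\varphi)=\Big(\tfrac12-\tfrac{n-1}{2n}\Big)\int_M\abs{\varphi}^{\frac{2n}{n-1}}=\tfrac{1}{2n}\int_M\abs{\varphi}^{\frac{2n}{n-1}}.
\]
It therefore suffices to bound $E:=\int_M\abs{\varphi}^{\frac{2n}{n-1}}$ from below by $\big(\frac{n}{4(n-1)}Y\big)^{n/2}$, where $Y=Y(M,\p M,[g])$. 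We may assume $\varphi\not\equiv0$; the trivial solution has zero energy, so it must either be excluded or handled by the convention that solutions are non-trivial, presumably with $Y>0$.

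\emph{Step 2: rewrite the equation as a weighted eigenproblem.} Set $F=\abs{\varphi}^{\frac{2}{n-1}}$. Then $\norm{F}_{L^n}^n=\int_M\abs{\varphi}^{\frac{2n}{n-1}}=E$. Put $\lambda=\norm{F}_{L^n}=E^{1/n}$ and $f=F/\lambda$, so that $\D\varphi=\lambda f\varphi$ with $\mathbf{B}\varphi=0$ and $\norm{f}_n=1$.

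To apply Theorem \ref{main_thm} we need $\varphi\in L^p$ for some $p>\frac{n}{n-1}$. This holds because $\varphi\in L^{\frac{2n}{n-1}}$, which is the natural energy space, and $\frac{2n}{n-1}>\frac{n}{n-1}$. The weight lies in $L^n$, as required.

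Theorem \ref{main_thm} then gives $\lambda^2\ge\frac{n}{4(n-1)}Y$, that is, $E^{2/n}\ge\frac{n}{4(n-1)}Y$. Raising both sides to the power $n/2$ yields the inequality.

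\emph{Step 3: the equality case.} Equality forces equality in \eqref{ineq_thm1.1}. By Theorem \ref{main_thm}, up to a conformal transformation, $(M,g)=(\S^n_+,g_{\rm st})$ and $\varphi$ is a $\pm\frac12$-Killing spinor. Since $\D\varphi=\mp\frac n2\varphi$ while $\D\varphi=\abs{\varphi}^{\frac{2}{n-1}}\varphi$ has a nonnegative coefficient, the sign must be the one giving $\D\varphi=\frac n2\varphi$, namely the $-\frac12$-Killing spinor.

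Two points need care here.

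\begin{itemize}
\item The conformal transformation must be compatible with \eqref{E-L-eq2}. The equation is conformally invariant in dimension $n$, since the nonlinearity is critical, by the same computation as Lemma \ref{conformal_invariance} with $f=\abs{\varphi}^{\frac{2}{n-1}}$, which transforms correctly as $h^{-\frac{2}{n-2}}f$. Thus the equality configuration can be transported.
\item For the converse, on the hemisphere a $-\frac12$-Killing spinor $\psi$ has constant length. Scaling it so that $\abs{\psi}^{\frac{2}{n-1}}=\frac n2$ gives a solution. Its energy is $\frac1{2n}\big(\frac n2\big)^n\rVol(\S^n_+)$, which equals the bound by \eqref{relative_Yamabe_constant_hemisphere}.
\end{itemize}

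The main obstacle is making sure the regularity hypotheses of Theorem \ref{main_thm} are met for an arbitrary solution. I expect the rest to be a direct rewriting.
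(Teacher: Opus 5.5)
Your proposal is correct and is essentially the paper's proof. On a solution, $\mathcal{L}(\varphi)=\frac{1}{2n}\int_M\abs{\varphi}^{\frac{2n}{n-1}}$, and Theorem \ref{main_thm} applied with $\lambda f=\abs{\varphi}^{\frac{2}{n-1}}$ gives $\big(\int_M\abs{\varphi}^{\frac{2n}{n-1}}\big)^{\frac2n}\ge\frac{n}{4(n-1)}Y(M,\p M,[g])$; the equality case is read off from the same theorem. Your extra remarks (excluding $\varphi\equiv0$ and needing $Y>0$, fixing the sign $-\frac12$ via $\lambda f\ge0$, and checking conformal invariance of \eqref{E-L-eq2}) make explicit what the paper leaves implicit. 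In the converse, though, do not rescale a Killing spinor: a solution that is a $-\frac12$-Killing spinor already satisfies $\mathbf{B}\varphi=0$ by hypothesis, and the equation itself forces $\abs{\varphi}^{\frac{2}{n-1}}\equiv\frac n2$.
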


\begin{proof}
Let $\varphi$ be a solution to \eqref{E-L-eq2}. Theorem \ref{main_thm} implies
\eq{
    \Big(\int_M \abs{\varphi}^{\frac{2n}{n-1}}\Big)^{\frac{2}{n}} = \norm{\varphi}_n^2 \geq \frac{n}{4(n-1)}Y(M,\p M,[g]).
}
Using \eqref{E-L-eq2} we have
\eq{
    \mathcal{L}(\varphi) = \frac{1}{2n}\int_M \abs{\varphi}^{\frac{2n}{n-1}}.
}
Hence we obtain the desired inequality. The equality case follows from Theorem \ref{main_thm} as well.

\end{proof}

For $(M,g)=(\S^n,g_{{\rm st}})$ (without boundary), the analogous inequality was proved by \cite{Isobe11} and the equality case was classified by \cite{Borrelli21}.

\appendix

\section{The regularity}

\begin{lemma}\label{lem_regularity_L^r}
    If $\varphi\in L^p$ is a solution to \eqref{0-form_eq} with $p>\frac{n}{n-1}$, then $\varphi\in L^r$ for any $\frac{n}{n-1}<r<\infty$.
\end{lemma}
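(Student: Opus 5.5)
We prove Lemma \ref{lem_regularity_L^r} by a bootstrap argument built on elliptic $L^q$-estimates for the Dirac operator under the chiral boundary condition. Since $\mathbf{B}$ is a local elliptic boundary condition, for $1<q<\infty$ the boundary value problem $(\D,\mathbf{B})$ satisfies an a priori estimate of the form
\begin{equation}\label{apriori_estimate}
\norm{\psi}_{W^{1,q}(M)} \le C\big(\norm{\D\psi}_{L^q(M)} + \norm{\psi}_{L^q(M)}\big), \qquad \mathbf{B}\psi=0 \ \hbox{on}\ \p M .
\end{equation}
The same estimate holds in the weak sense: a distributional solution $\psi\in L^{q}$ with $\D\psi\in L^q$ and $\mathbf{B}\psi=0$ lies in $W^{1,q}$. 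We take this from the standard theory of elliptic boundary problems (Agmon--Douglis--Nirenberg, or the Lopatinskii--Shapiro framework as in \cite{Booss-Wof93}). The boundary condition is checked through the self-adjointness and ellipticity facts recorded in Section \ref{sec2.3}.

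We use two tools. The first is the Sobolev embedding $W^{1,q}\hookrightarrow L^{q^*}$ with $\frac{1}{q^*}=\frac1q-\frac1n$ for $q<n$. The second is H\"older's inequality: $f\in L^n$ and $\varphi\in L^s$ give $f\varphi\in L^q$ with $\frac1q=\frac1n+\frac1s$. One bootstrap step then reads as follows. From $\varphi\in L^s$ we get $\D\varphi=\lambda f\varphi\in L^q$, hence $\varphi\in W^{1,q}\subset L^{q^*}$, and $\frac{1}{q^*}=\frac1s$. So the exponent does not improve, because $n$ is exactly the critical integrability for $f$. This borderline case is the main obstacle.

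To get past it we split the weight. Fix $\varepsilon>0$ and write $f=f_1+f_2$ with $\norm{f_1}_{L^n}<\varepsilon$ and $f_2\in L^\infty$, for instance by truncating $f$ at a large level. We then view $\varphi$ as a solution of the linear problem $\D\psi-\lambda f_1\psi=\lambda f_2\varphi$, $\mathbf{B}\psi=0$. For every $q\in(1,n)$ we have
\[
\norm{f_1\psi}_{L^q}\le \varepsilon\norm{\psi}_{L^{q^*}}\le C\varepsilon\norm{\psi}_{W^{1,q}} .
\]
Hence, once $\varepsilon$ is small enough relative to the constant in \eqref{apriori_estimate} (with $q$ ranging over a compact subinterval), the operator $\D-\lambda f_1$ is a small perturbation and satisfies the same estimate. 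The right-hand side $\lambda f_2\varphi$ has the integrability of $\varphi$ itself, so $\varphi\in L^s$ yields $\varphi\in W^{1,s}\subset L^{s^*}$, a gain of $\frac1n$ in the reciprocal exponent.

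The delicate point is that we first need $\varphi$ in the right space in order to apply the perturbed estimate. We would handle this in one of two ways. One option is a duality and uniqueness argument: solve the perturbed problem in $W^{1,s}$ by the Fredholm property, modulo a finite-dimensional kernel that we control through smallness. The other is to use cutoff approximations, which is where the hypothesis $p>\frac{n}{n-1}$ enters. With $s=p$ we get $q$ with $\frac1q=\frac1n+\frac1p<1$, so $q>1$ and the estimate is valid. Iterating $\frac1{s_{k+1}}=\frac1{s_k}-\frac1n$ reaches any $r<\infty$ after finitely many steps. Near the threshold $s_k\ge n$ we replace Sobolev by $W^{1,q}\subset L^r$ for every $r<\infty$ when $q\ge n$, always keeping $q<n$ or slightly below it. The lower bound $r>\frac{n}{n-1}$ is simply inherited from $p$, with interpolation filling in intermediate values.
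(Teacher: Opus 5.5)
Your strategy is the one the paper uses. It cites the Frank--Loss argument, with the chiral elliptic estimate in place of Hardy--Littlewood--Sobolev, and that argument is exactly the splitting $f=f_1+f_2$ with $\norm{f_1}_{L^n}$ small and $f_2$ bounded. However, the step you call ``delicate'' is the whole content of the lemma, and you leave it as a choice between two unexecuted options.

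\textbf{What is missing.} An a priori estimate for $\D-\lambda f_1$ on $W^{1,s}$ gives nothing until you know $\varphi\in W^{1,s}$. What you need is existence in the better space together with uniqueness in the space where $\varphi$ already lives. Your first option points that way, but the mechanism you offer (a kernel ``controlled through smallness'') is not the right one. Smallness of $f_1$ does not remove a kernel of $\D$ itself. Moreover, your estimate carries the lower-order term $\norm{\psi}_{L^q}$, so it gives neither injectivity nor surjectivity.

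\textbf{The missing ingredient.} The fact you need is that $\D$ is injective under the chiral condition. Suppose $\D\psi=0$ and $\mathbf{B}\psi=0$. Then $\D(G\psi)=-G\D\psi=0$, and $G\psi=\pm\nu\cdot\psi$ on $\p M$. Green's formula for the pair $(\psi,G\psi)$ then gives $\int_{\p M}\abs{\psi}^2=0$, and unique continuation yields $\psi\equiv0$. Consequently $T=\D^{-1}$ exists and is bounded $L^t\to W^{1,t}\subset L^{nt/(n-t)}$ for $1<t<n$; this is the estimate the paper quotes, with no lower-order term. Injectivity, together with self-adjointness and duality, also gives $\varphi=\lambda T(f\varphi)$ for the weak $L^p$ solution. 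Your ``cutoff approximation'' alternative is not an argument as stated. The hypothesis $p>\frac{n}{n-1}$ enters only to ensure $\frac1n+\frac1p<1$.

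\textbf{How the argument closes.} Set $K\psi:=\lambda T(f_1\psi)$. Then $\norm{K\psi}_{L^s}\le C_s\lambda\norm{f_1}_{L^n}\norm{\psi}_{L^s}$ for every $s\in(\frac{n}{n-1},\infty)$, with $C_s$ bounded on compact subintervals. So for $\norm{f_1}_{L^n}$ small, $K$ is a contraction simultaneously on $L^p$ and on $L^{p^*}$, where $p^*=\frac{np}{n-p}$ (any finite exponent if $p\ge n$). Now $(1-K)\varphi=\lambda T(f_2\varphi)\in L^{p^*}\subset L^p$. The Neumann series produces a single element lying in both spaces, and uniqueness in $L^p$ identifies it with $\varphi$. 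Hence $\varphi\in L^{p^*}$, and you iterate.

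Equivalently, in your Sobolev formulation: $\D-\lambda f_1$ is an isomorphism from $\{\psi\in W^{1,s}:\mathbf{B}\psi=0\}$ onto $L^s$ for all $s$ in a compact subinterval of $(1,n)$. You then solve in $W^{1,p}$ and use injectivity on $W^{1,q}$, where $\frac1q=\frac1n+\frac1p$.

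\textbf{The endpoint.} Your perturbation bound $\norm{f_1\psi}_{L^s}\lesssim\norm{\psi}_{W^{1,s}}$ fails for $s\ge n$. So the last step must be taken with $s$ slightly below $n$; your sentence mixing ``$q\ge n$'' and ``$q<n$'' is inconsistent. The $L^s$-contraction formulation avoids this threshold, because $K$ acts on every $L^s$ with $s<\infty$.
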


\begin{proof} 

The proof follows completely the one in \cite{FL1} with one exception that we use the ellipticity theory instead of the Hardy-Littlewood-Sobolev (HLS) inequality.

The argument is the same as \cite{WZ25_zero_mode}*{Lemma A.1}. The only difference is that we need to use the following elliptic apriori estimate for the Dirac operator with boundary condition (see for instance \cite{CJSZ2018}): let $\psi$ be a solution to
\eq{
    \begin{cases}
        \D\psi = \chi\quad\hbox{in}\ M,\\
        \mathbf{B}\psi = 0\quad\hbox{on}\ \p M,
    \end{cases}
}
and $\chi\in L^q$ with $1<q<\infty$, then
\eq{\label{apriori_estimate}
    C^{-1}\norm{\psi}_{\frac{nq}{n-q}} \leq \norm{\psi}_{W^{1,q}}\leq C \norm{\chi}_q
}
for some $C=C(n,q)>0$.

\end{proof}

\noindent{\sc Acknowledgment.}
The author would like to sincerely thank Prof. Guofang Wang for his constant encouragement and insightful discussions.
The author also appreciate the financial support from CSC (No. 202306270117).

\bibliographystyle{alpha}
\bibliography{BibTemplate.bib}

\end{document}